%
%
%
%
%
%
%
%
%
%
%

\documentclass[11pt, english]{amsart}
\usepackage{babel,amssymb,amsmath,latexsym,amsthm,amscd,eucal,enumerate,verbatim,
setspace}  
\usepackage[dvips]{graphicx, color}
\usepackage[all]{xy}
\usepackage[left=2.5cm,top=2.5cm, bottom=2.5cm,right=2.5cm]{geometry}

\parskip=\smallskipamount

\newtheorem{theorem}{Theorem}[section]
\newtheorem{lemma}[theorem]{Lemma}
\newtheorem{corollary}[theorem]{Corollary}
\newtheorem{proposition}[theorem]{Proposition}

\theoremstyle{definition}
\newtheorem{definition}[theorem]{Definition}
\newtheorem{example}[theorem]{Example}

\newtheorem{remark}[theorem]{Remark}

\numberwithin{equation}{section}

\newcommand{\x}{\bf{x}\rm}

\newcommand{\cod}{\mbox{cod}\,}

\newcommand{\rr}{\mathbb{R}}

\newcommand{\re}[1]{\rr^{#1}}
\newcommand{\avb}[3]{#1 :\re{#2}\to \re{#3}}
\newcommand{\map}[3]{#1 : #2 \to #3}
\newcommand{\rom}[4]{#1^{#2}(#3,#4)}
\newcommand{\der}[2]{\frac{\partial #1}{\partial #2}}

\newcommand{\en}[1]{\left\| #1 \right\|}
\newcommand{\enorm}[1]{\left\| #1 \right\|}

\newcommand{\as}{\mbox{Ast}}

\newcommand{\tse}[1]{\tilde S_{\epsilon}(#1)}
\newcommand{\sep}{S_{\epsilon}}

\newcommand{\sigfn}{\Sigma(\hat f) \setminus \{0\}}



%
%
%
%
\onehalfspacing
\begin{document}
\bibliographystyle{plain}
\title{Topological equivalence of finitely determined real analytic plane-to-plane
map germs}
\author{Olav Skutlaberg}
\address{Olav Skutlaberg: Matematisk Institutt, Universitetet i Oslo,
Postboks 1053 Blindern, 0316 Oslo, Norway}
\email{oskutlab@math.uio.no}
%
%
\subjclass[2000]{}
\date{\today}
\keywords{}

\subjclass[2000]{}
\date{\today}
\keywords{}
\maketitle
\begin{abstract}
Generic smooth map germs $(\re 2,0)\to (\re 2,0)$ are topologically
equivalent to cones of mappings $S^1\to S^1$. We carry out a complete
topological classification of smooth stable mappings of the circle and
show how this classification leads, via the result mentioned above, to
a topological classification of finitely determined  real analytic map germs $(\re 2,0)\to (\re 2,0)$.
\end{abstract}
\section{Introduction}
Let $f$ and $g$ be smooth mappings between smooth manifolds $N$ and $P$ of dimensions $n$ and $p$, respectively. Let $0\le r\le \infty$. We say that $f$ and $g$ are $\mathcal A_r$-equivalent if there is a commutative diagram 
\[
\begin{CD}
N@>f>>P\\
@VhVV	@VVkV\\
N@>g>>P
\end{CD}
\]
where $h$ and $k$ are $C^r$ diffeomorphisms. Similarly, if $f$ and $g$ are smooth map germs $(N,p)\to (P,q)$, then we say that $f$ and $g$ are $\mathcal A_r$-equivalent if there is a commutative diagram 
\[
\begin{CD}
(N,p)@>f>>(P,q)\\
@VhVV	@VVkV\\
(N,p)@>g>>(P,q)
\end{CD}
\]
where $h$ and $k$ are germs of $C^r$ diffeomorphisms. $\mathcal
A_0$-equivalence is usually referred to as topological
equivalence. Let $\rom C{\infty}NP$ be the set of proper smooth mappings
$N\to P$, and let $\rom C{\infty}np$ (resp. $\mathcal O(n,p)$) be the set
of smooth (resp. real analytic) map germs $(\re n,0)\to (\re p,0)$.

A subset $\Sigma\subset \rom C{\infty}np$ (resp. $\rom {\mathcal O}{}np$) is
\emph{proalgebraic} if 
\[
\Sigma = \bigcap_{r\geq 1} (j^r)^{-1}(\Sigma_r)
\] 
where each $\Sigma_r\subset J^r(n,p)$ is an algebraic subvariety. A
proalgebraic set $\Sigma$ is of \emph{infinite codimension} if 
\[
\lim_{r\to \infty} \cod \Sigma_r = \infty.
\]
A property of smooth (real analytic resp.) germs is said to hold
\emph{in general} if the set of germs not having the property is
contained in a proalgebraic set of infinite codimension. 

 By the cone of a smooth map $f:S^{n-1}\to S^{p-1}$, we mean the map $F: S^{n-1}\times [0,1)/S^{n-1}\times \{0\} \to S^{p-1}\times [0,1)/S^{p-1}\times \{0\}$ given by 
\[
F([(p,t)])=[(f(p),t)].
\] 

Consider the space $\rom C{\infty}np$ when $n\le p$, $n\ne 4,5$ and
$(n,p)$ is in the 'nice range'. The 'nice range' consists of the pairs of
dimensions of $N$ and $P$ such that the set of proper smooth stable
mappings $N\to P$ are dense in the set of proper smooth mappings $N\to P$. It is shown in \cite{Fuk1} that for
germs in $\rom C{\infty}np$, the property of having a realization
which is topologically equivalent to the
cone of a smooth stable mapping $S^{n-1}\to S^{p-1}$ via
homeomormphisms which are diffeomorphisms outside the origin holds in
general. We say that map-germs with this property are \emph{generic}.
Thus, for $n,p$ in this range, the classification of generic map germs $(\re n,0)\to(\re p,0)$ with respect to topological equivalence is contained in the classification of the smooth stable mappings $S^{n-1}\to S^{p-1}$ in the sense that the $\mathcal A_0$-equivalence class in $C^{\infty}(n,p)$ of a generic map germ corresponds to an $\mathcal A_{\infty}$-equivalence class in $C^{\infty}(S^{n-1},S^{p-1})$.

In this paper we carry out this classification in the real analytic
case for $n=p=2$. In
Section \ref{S:class} we classify the smooth stable mappings $S^1\to
S^1$ and show how to generate complete lists of the $\mathcal
A_{\infty}$-equivalence classes of such mappings. In the case of
1-dimensional spheres, the classification is essentially a combinatorical problem. In Section
\ref{S:class2} we classify finitely determined real analytic map germs $(\re
2,0)\to (\re 2,0)$ using the above strategy. Our method solves
the so-called 'recognition problem': Given two finitely determined real analytic map germs $(\re
2,0)\to (\re 2,0)$, are they $\mathcal A_0$-equivalent?


\section{Classification of smooth stable mappings $S^1 \to S^1$}\label{S:class}
Let $\map f{S^1}{S^1}$ be a smooth
stable mapping. Then $f$ has only Morse singularities, $\Sigma(f)$ is
finite and $f$ has no singular double points. If we traverse the
circle in the source counter-clockwise and registar the singular points of $f$ and
the pre-images of singular values of $f$ we encounter, then the
topological type of $f$ is in some sense determined by the pattern
arising. In the following we give the last sentence precise content.


\subsection{Definition of $\as(f)$}
Let $\map P{[0,2\pi)}{S^1}$ be the parametrization given by
$P(t) = (\cos t, \sin t)$.
If $f$ has no singular points, then we define $\as(f)=(p,p,\ldots,p)$
where $p$ is repeated $\#f^{-1}(1)$ times. 
Assume $f$ has singular points $s_i(f)$, $i=1,\ldots,n(f)$ where 
$k<l \Rightarrow P^{-1}(s_k(f))<P^{-1}(s_l(f))$.
Let
$\sigma_i(f) = f(s_i(f))$ and let $f^{-1}(\sigma_i(f))\setminus\{s_i(f)\} = \{p_{ij}(f)\}_{j=1}^{m_i}$
where  $k<l \Rightarrow P^{-1}(p_{ik}(f))<P^{-1}(p_{il}(f))$. Let 
\[
A(f)=\{a_1,a_2,\ldots,a_N\}=P^{-1}\left(\bigcup_{i=1}^n(\{s_i(f)\}\cup
  \{p_{ij}(f)\}_{j=1}^{m_i})\right)
\]
where $i<j\Rightarrow a_i<a_j$ and $N=N(f)=n(f)+\sum_{i=1}^{n(f)}
m_i$. Let $\Delta(f)=f(\Sigma(f))=\{\sigma_1(f),\ldots,
\sigma_n(f)\}$ and define
\[
B(f)=\{b_1,b_2,\ldots, b_{n(f)}\} = P^{-1}(\Delta(f))
\]
where $i<j\Rightarrow b_i<b_j$.

Next, let
\[S=\{s,p\}, \quad S^* = \bigcup_{i=1}^{\infty}\{s_i,p_i\}\]
and define maps $\map T{A(f)}S$ and $\map {T^*}{A(f)}{S^*}$ given by
\[
T(x)=\begin{cases}
s, &\text{if $P(x)=s_i(f)$;}\\
p, &\text{if $P(x) = p_{ij}(f)$}
\end{cases}
, \quad
T^*(x)= \begin{cases}
s_i, &\text{if $P(x)=s_i(f)$;}\\
p_i, &\text{if $P(x) = p_{ij}(f)$}
\end{cases}.
\]
Now, define the \emph{associated tuples} of $f$ to be the ordered $N(f)$-tuples
\[
\as(f) = \big(T(a_1),T(a_2),\ldots,T(a_{N(f)})\big)
\]
and
\[
\as^*(f)= \big(T^*(a_1),T^*(a_2),\ldots,T^*(a_{N(f)})\big)
\]

\begin{figure}
\begin{center}
\resizebox{13cm}{!}{\input{art4-1.pstex_t}}
\caption{Visualization of a map $f:S^1\to S^1$ with
  $\as(f)=(p,s,s,p,p,s,s,p)$. The curve $c:[0,2\pi)\to \re 2$ is such
  that $c(t)/\en{c(t)}=f(P(t))$. }
\label{F:1}
\end{center}
\end{figure}

\begin{remark}\label{R:tup}
Given $\as^* (f)$ one can obtain $\as(f)$ by just forgetting the
indices of the $s$ and $p$ in $\as^*(f)$. Conversely, given $\as(f)$,
it is easy to find the right indices for the $s$ in $\as^*(f)$ and
then we can find the indices of $p$ in $\as^*(f)$ as well, using the
fact that at a singular point, $f$ changes the behaviour of being
orientation preserving or orientation reversing. This enables us to
find the correct indices of the $p$.
\end{remark}

\subsection{Legal permutations}\label{SS:gr}
Let $S_k$ be the group of permutations of $\mathbb Z/k\mathbb Z$. Some permutations are of particular interest when
trying to classify stable maps under $\mathcal A_0$-equivalence. We start
with some definitions.

\begin{definition}
An element $\sigma\in S_k$ is a \emph{switch} if there is some $a\in
\mathbb Z$ such that 
\[
\sigma([x])=[x+a].
\]
Let $Sw_k$ be the set of switches in $S_k$.
\end{definition}
\begin{definition}
The permutation $r\in S_k$ given by $r([x])=[-x]$ is called the
\emph{reversation}. Let $R_k = \{ \mbox{id}, r\}$.
\end{definition}

\begin{definition}
The subgroup $L_k=\{\sigma\circ \tau \mid \sigma\in Sw_k, \tau \in R_k\}$
 of $S_k$ called the group of \emph{legal permutations}.
\end{definition} 

Let $X$ be a set. For every $k$, let $e_k:\{1,2,\ldots,k\}\to \mathbb
Z/k\mathbb Z$ be the bijection $x\mapsto [x]$. We introduce an
equivalence relation $E_k$ on
$X^k$ by
the rule $(t_1,t_2,\ldots t_k)\sim (t'_1,t'_2,\ldots, t'_k)$ if there is a permutation $\rho \in L_k$ such that
$t_i=t'_{e_k^{-1}(\rho(e_k(i)))}$ for all $i=1,\ldots, k$. Denote the
$E_k$-equivalence class of $t\in X^k$ by $[t]_E$. For $\rho\in S_k$
and $t\in X^k$, let $\rho\cdot t\in X^k$ be defined by $(\rho
\cdot t)_i=t_{e_k^{-1}(\rho(e_k(i)))}$ for $i=1,\ldots k$. For
simplicity we write $\rho(i)$ for $e_k^{-1}(\rho(e_k(i)))$.


\subsection{The main theorem of the classification}
The aim of this section is to prove the following theorem. 

\begin{theorem}\label{T:MT1}
Let $f,g\in \rom C{\infty}{S^1}{S^1}$ be $C^{\infty}$-stable. Then  
\[
f\sim_{\mathcal A_{\infty}}g \Leftrightarrow N(t)=N(g) \mbox{\, and \,}[\as (f)]_E=[\as(g)]_E.
\]
\end{theorem}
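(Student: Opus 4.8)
The plan is to prove both implications separately, with the forward direction ($\Rightarrow$) being the more routine one and the converse ($\Leftarrow$) carrying the real content.

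For the forward direction, suppose $f\sim_{\mathcal A_\infty}g$ via diffeomorphisms $h,k$ of $S^1$ with $k\circ f=g\circ h$. Since $h$ is a diffeomorphism of $S^1$, it carries $\Sigma(f)$ bijectively onto $\Sigma(g)$, so $N(f)=N(g)$; likewise $k$ carries $\Delta(f)$ onto $\Delta(g)$ and $h$ carries each fiber $f^{-1}(\sigma_i(f))$ onto a fiber of $g$. The key point is that $h$ is either orientation preserving or orientation reversing on $S^1$. If $h$ preserves orientation, then composing with the parametrization $P$ shows that the cyclic order of the points in $A(f)$ is sent to the cyclic order of the points in $A(g)$, possibly with a shift of the base point — i.e. the relabeling of indices is given by a switch in $Sw_{N}$. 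If $h$ reverses orientation, the cyclic order is reversed, giving a reversation composed with a switch. In either case the induced identification of index sets lies in $L_N$, and since $h$ sends singular points to singular points and non-singular preimages to non-singular preimages (because $k\circ f=g\circ h$ forces $\mathrm{rank}\,df = \mathrm{rank}\,dg\circ dh$), the letter $s$ or $p$ attached to each position is preserved. Hence $\mathrm{Ast}(f)$ and $\mathrm{Ast}(g)$ differ by an element of $L_N$, i.e. $[\mathrm{Ast}(f)]_E=[\mathrm{Ast}(g)]_E$.

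For the converse, assume $N:=N(f)=N(g)$ and $[\mathrm{Ast}(f)]_E=[\mathrm{Ast}(g)]_E$; we must build diffeomorphisms $h,k$ of $S^1$ realizing the $\mathcal A_\infty$-equivalence. First, using Remark \ref{R:tup}, I recover $\mathrm{Ast}^*(f)$ from $\mathrm{Ast}(f)$ and similarly for $g$, and note that an element $\rho\in L_N$ carrying $\mathrm{Ast}(f)$ to $\mathrm{Ast}(g)$ also matches up the starred tuples (the index-recovery procedure in the remark is canonical, hence equivariant under the cyclic/reversal symmetries). Build $h$ on $S^1$ as follows: it must send the ordered point set $A(f)\subset S^1$ to $A(g)\subset S^1$ respecting the cyclic order dictated by $\rho$, sending each $s_i(f)$ to $s_{\pi(i)}(g)$ and each $p_{ij}(f)$ to the appropriate $p_{\pi(i),j'}(g)$; on each arc of $S^1\setminus A(f)$ extend $h$ to a diffeomorphism onto the corresponding arc of $S^1\setminus A(g)$. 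The map $k$ is built analogously on the target, sending each $\sigma_i(f)$ to $\sigma_{\pi(i)}(g)$ and interpolating diffeomorphically on the complementary arcs. One then checks $k\circ f$ and $g\circ h$ agree: they agree at the distinguished points by construction, and on each complementary arc both restrictions are diffeomorphisms between the same pair of arcs with the same endpoint behaviour, so after a further isotopy (adjusting the interpolations, which is possible because a diffeomorphism between two arcs fixing endpoints is isotopic to any other such) they coincide. Here one uses that stability forces $f$ to be an immersion on each arc between consecutive points of $A(f)$ with no singular double points, so the combinatorial data genuinely determines the arc-by-arc picture.

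The main obstacle is the converse, and within it the verification that the local model of $f$ near each singular point is standard enough that the arc-interpolation argument actually produces a \emph{smooth} (not merely topological) conjugacy — i.e. that two stable maps $S^1\to S^1$ with the same $\mathrm{Ast}$ are smoothly, not just topologically, equivalent. This rests on the fact that a smooth stable map of the circle has only fold-type (Morse) singularities, so near each $s_i$ the map looks like $t\mapsto t^2$ in suitable coordinates; matching these normal forms and then patching the immersive arcs requires a careful but standard isotopy/partition-of-unity argument to glue the local diffeomorphisms into global ones. I expect the bookkeeping of orientation-preserving versus orientation-reversing behaviour along successive arcs (which is exactly what forces the constraint that at each $s_i$ the map switches between the two, as exploited in Remark \ref{R:tup}) to be the subtle part that makes the interpolations compatible.
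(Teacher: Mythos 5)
Your forward direction is essentially the paper's Step 3 (the diffeomorphisms $h,k$ carry $\Sigma(f)$, $\Delta(f)$, $f^{-1}(\Delta(f))$ to the corresponding sets for $g$, and orientation-preserving versus orientation-reversing $h$ accounts exactly for the switches and the reversation in $L_N$), so that part matches. For the converse you take a genuinely different route. The paper first normalizes $f$ and $g$ by diffeomorphisms so that $A(f)=A(g)$, the orientations agree and $\sigma_i(f)=\sigma_i(g)$ (reducing the general case $[\as(f)]_E=[\as(g)]_E$ to the case $\as(f)=\as(g)$ by precomposing $g$ with rotations and the reflection), and then builds an explicit chartwise convex-combination homotopy $f_t$ from $f$ to $g$, checking that every $f_t$ is stable (same singular set, Morse singularities, no singular double points); the conclusion follows because a smooth path of stable maps consists of mutually $\mathcal A_\infty$-equivalent maps. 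You instead construct the conjugating diffeomorphisms $h,k$ directly, arc by arc, from the combinatorial data. Your approach is more elementary in that it avoids invoking triviality of families of stable maps, and it makes the role of $\as^*$ and of the orientation bookkeeping very transparent; the paper's approach buys a cleaner reduction of the smoothness issues to a derivative computation in charts (the formula \eqref{E:s1}) rather than a gluing problem at the fold points.

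One caveat on your converse: the phrase ``after a further isotopy they coincide'' is not quite the right mechanism, since a target arc of $S^1\setminus\Delta(f)$ typically receives several source arcs, so you cannot adjust $k$ on that arc independently for each of them. The correct move is to fix $k$ freely on the target (respecting the combinatorics) and then \emph{solve} for $h$ on each source arc by $h=(g|)^{-1}\circ k\circ f$ with the branch of the local inverse dictated by $\as^*$; what then remains is exactly the smoothness of this forced $h$ at the points of $A(f)$. At the $p_{ij}$ this is immediate, and at the $s_i$ the fold normal form gives $h(t)=\pm\sqrt{k(t^2)}=t\sqrt{u(t^2)}$ for a smooth positive $u$, which is a local diffeomorphism. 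You correctly identify this as the crux but do not carry it out; with that computation supplied, your argument closes.
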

\begin{proof}
We prove the theorem when $\Sigma(f),\Sigma(g)\ne \emptyset$. The same
technique applies when $\Sigma(f=)\Sigma(g)=\emptyset$.
The theorem is proved in three steps:

\emph{Step 1} is to prove that $\as(f)=\as(g) \Rightarrow f\sim_{\mathcal A_{\infty}}g$. Suppose   $\as(f)=\as(g)$. After composition with diffeomorphisms in source, we may assume that $A(f)=A(g)$ and that 1 is a regular point of $f$, and hence also of $g$. A priori, it may happen that $f$ is orientation preserving on $P([a_1(f),a_2(f)])$ while $g$ is not, but after composition with a diffeomorphism in target, we may assume that $f$ and $g$ are orientation preserving on the same subset of source, and that $\sigma_i(f)=\sigma_i(g)$ for all $i$ as well. Finally, we may assume that $(1,0) \notin f(\Sigma(f))$.

We are going to define a smooth homotopy $f_t$ of stable mappings of
$S^1$ starting at $f$  and ending at $g$. This will furnish this step
because all of the $f_t$ will be smoothly equivalent. The standard
technique for producing homotopies between mappings in Euclidean space
by taking convex combinations of the mappings is not applicable here,
since $S^1$ is not a vector space. Nevertheless, by choosing appropriate
charts, the same strategy may be applied to coordinate neighbourhoods,
and our assumptions on $f$ and $g$ ensure that the resulting mapping
is in fact a smooth homotopy. The details are as follows. 

Let $n=n(f)=n(g)$, and let $N=N(f)=N(g)$. Let $\tau\in S_n$ be such
that $b_i=b_i(f)=P^{-1}(\sigma_{\tau(i)}(f))$. Notice that $b_1>0$
by the assumption $(1,0)\notin \Delta(f)$. Let 
\[0<v<\frac 12 \min_i (b_{i+1}-b_i,b_1,2\pi-b_n)\] and define
\[ \Theta_i:P(b_i-v,b_{i+1}+v)\to(-v,b_{i+1}-b_i+v), \quad P(x)\mapsto x-b_i\]
for $i=1,\ldots,n-1$. For $i=n$ we define
\[\Theta_n:S^1\setminus P([b_1+v,b_n-v]) \to (-v,b_1+2\pi -b_n+v)\]
by
\[ 
P(x)\mapsto 
\begin{cases}
 2\pi-b_n+x, & x\in[0,b_1+v)\\
 x-b_n,& x\in(b_n-v, 2\pi).
 \end{cases}
 \]
 The mappings $\Theta_i, i=1,\ldots, n$ are well defined by the
 choice of $v$.  Together with their domains of definition, they cover $S^1$ with local charts. 
 
 Let 
 \[0<u<\frac 12 \min_i(a_{i+1}-a_i, a_1, 2\pi-a_N)\]
and let $U_i=P(a_i-u,a_{i+1}+u), i=1,\ldots, N-1$ and
$U_N=S^1\setminus P([a_1+u, a_N-u])$. In the same way, let
$V_i=P(b_i-v,b_{i+1}+v), i=1,\ldots, n-1$ and
$V_n=S^1-P([b_1+v,b_n-v])$. We can now define our homotopy. 
By continuity of $f$ and $g$, if $v$ is small enough, then for
all $i$ there is a $j$ such that both $f(U_i)$ and $g(U_i)$ are
contained in $V_j$. More precisely; there exists $\rho:\{1,\ldots,N\}
\to \{1,\ldots,n\}$ such that for all $i$, $f(U_i)\cup g(U_i) \subset
V_{\rho(i)}$. For even smaller $u$, we can ensure that $\mbox{cl}(f(U_i)\cup
g(U_i))\subset V_{\rho(i)}$. Let $F:S^1\times(-\epsilon,1+\epsilon)\to S^1$ be defined by
\[
F(p,t)=f_t(p)=\Theta_{\rho(i)}^{-1}(t\Theta_{\rho(i)}(g(p))+(1-t)\Theta_{\rho(i)}(f(p))), \quad p\in U_i.
\]
We need to show that $f_t(p)$ is well defined on $S^1$ and that
$f_t(p)$ is smooth. The continuity of $\Theta_{\rho(i)}$ and the observation
that $t\Theta_{\rho(i)}(g(p))+(1-t)\Theta_{\rho(i)}(f(p))$ lies
between $\Theta_{\rho(i)}(g(p))$ and $\Theta_{\rho(i)}(f(p))$ for
$0\le t\le 1$, shows that $f_t$ is well defined on $U_i$ when
$\epsilon$ is chosen small enough.  

Next we show that the definitions of $f_t$ agree on $U_i\cap U_j$. It is enough to check the combinations $(i,j)=(N,1)$ and $(i,j)=(i,i+1)$ for $i<N$. The other combinations of $i$ and $j$ give $U_i\cap U_j=\emptyset$. We first assume that $1\le \rho(i)=\rho(j)-1<n$. Writing out the definitions,
\[
\begin{aligned} 
&\Theta_{\rho(i)}^{-1}(t\Theta_{\rho(i)}(g(p))+(1-t)\Theta_{\rho(i)}(f(p)))\\
=&\Theta_{\rho(i)}^{-1}(t[P^{-1}(g(p))-b_{\rho(i)}]+(1-t)[P^{-1}(f(p))-b_{\rho(i)}])\\
=&\Theta_{\rho(i)}^{-1}(t[P^{-1}(g(p))]+(1-t)[P^{-1}(f(p))]-b_{\rho(i)})\\
=&P(t[P^{-1}(g(p))]+(1-t)[P^{-1}(f(p))])\\
=&\Theta_{\rho(j)}^{-1}(t\Theta_{\rho(j)}(g(p))+(1-t)\Theta_{\rho(j)}(f(p))).
\end{aligned}
\]
For $\rho(i)=n$ and $\rho(j)=1$, we have
\[
\begin{aligned} 
&\Theta_1^{-1}(t\Theta_1(g(p))+(1-t)\Theta_1(f(p)))\\
=& P(t[P^{-1}(g(p))]+(1-t)[P^{-1}(f(p))])
\end{aligned}
\]
and
\[
\begin{aligned} 
&\Theta_n^{-1}(t\Theta_n(g(p))+(1-t)\Theta_n(f(p)))\\
=&\Theta_n^{-1}(t[P^{-1}(g(p))+2\pi-b_n]+(1-t)[P^{-1}(f(p))+2\pi-b_n])\\
=&\Theta_n^{-1}(t[P^{-1}(g(p))]+(1-t)[P^{-1}(f(p))]+2\pi-b_n)\\
=& P(t[P^{-1}(g(p))]+(1-t)[P^{-1}(f(p))]).
\end{aligned}
\]
This shows that $f_t$ is well defined on $S^1$ in this case. the case
$1<\rho(i)=\rho(j)+1\le n$ and the case $\rho(i)=1,\rho(j)=n$ may be
checked in a similar way. 

It remains to show that $f_t$ has finitely many singularities, all of Morse type, and no singular double points. In fact, $f_t$ has the same singular set and discriminant set as $f,g$. To actually show this, we need to work with charts in the source too. Let
\[ \theta_i:U_i\to(-u,a_{i+1}-a_i+u), \quad P(x)\mapsto x-u_i\]
for $i=1,\ldots,N-1$. For $i=N$ we define
\[\theta_N:U_N \to (-u,a_1+2\pi -a_N+u)\]
by
\[ 
P(x)\mapsto 
\begin{cases}
 2\pi-a_N+x, & x\in[0,a_1+u)\\
 x-a_N,& x\in(a_N-u, 2\pi).
 \end{cases}
 \]
The charts $(\theta_i,U_i)$ cover $S^1$.  Now, we may compute
\begin{equation}\label{E:s1}
\Theta_{\rho(i)}\circ f_t \circ \theta_i^{-1}(x)=t\Theta_{\rho(i)}(g(\theta_i^{-1}(x)))+(1-t)\Theta_{\rho(i)}(f(\theta_i^{-1}(x))).
\end{equation}
By our assumptions, $f$ and $g$ are equally oriented at every regular
point, and therefore the derivatives
with respect to $x$ of two terms on
the right side of \eqref{E:s1} have the same sign, and hence,
$\Sigma(f_t)=\Sigma(f)=\Sigma(g)$. Moreover, by definition of Morse
singularities, we must have
\[
\frac {d^2}{dx^2}\Theta_{\rho(i)}(f(\theta^{-1}(x)))\ne 0 \quad
  \mbox{ and }\quad \frac {d^2}{dx^2}\Theta_{\rho(i)}(g(\theta^{-1}(x)))\ne 0
\]
whenever $\theta^{-1}(x)\in \Sigma(f)$ and these second derivatives
must have the same sign at singular points. It follows that in these charts, the
second derivative of $f_t$ with respect to $x$ is different from 0 at
every singular point. Therefore, $f_t$ has only Morse
singularities. From the definiton of $f_t$, we see that $f(p)=g(p)$
implies that $f(p)=g(p)=f_t(p)$. It follows that $f_t$ has no singular
double points, and hence, $f_t$ is stable.

\emph{Step 2}. Assume that $\as(f)=\rho\cdot \as(g)$ for some $\rho =
\sigma\cdot \tau\in L_{N(g)}$, where $\sigma\in Sw_{N(g)}$, $\tau\in
R_{N(g)}$.  If $\tau = \mbox{id}$, then there is some
$\theta = \theta(\sigma)$ such that if \[\map {R_{\sigma}}{S^1}{S^1}\]
is given by \[e^{i\theta}\mapsto e^{i(\theta+\theta(\sigma))},\] then
\[\as(f)=\as(g\circ R_{\sigma})\] and by Step 1 there are
diffeomorphisms $h$ and $k$ such that the following diagram commutes.
\[
\xymatrix{
S^1\ar@/^10pt/[drr]^g& &\\
S^1\ar[u]^{R_{\sigma}}\ar[rr]^{g\circ R_{\sigma}}&&S^1\\
S^1\ar[u]^h\ar[rr]^f&&S^1\ar[u]_k}
\]
Similarly, if $\sigma =\mbox{id}$ and $\tau = r\in R_{N(g)}$, then, if \[\map
M{S^1}{S^1}\] is given by \[e^{i\theta}\mapsto e^{-i\theta},\] then
\[\as(f)=\as(g\circ M)\] and by Step 1 again, we have a commutative diagram:
\[
\xymatrix{
S^1\ar@/^10pt/[drr]^g& &\\
S^1\ar[u]^{R_{M}} \ar[rr]^{g\circ M}&&S^1\\
S^1\ar[u]^h\ar[rr]^f&&S^1\ar[u]_k}
\]
If $\rho = \sigma\circ r$ for some $\sigma \in Sw_{N(g)}$, then
\[
\as(f) = \sigma\cdot \as(g\circ M) = \as(g\circ M\circ R_{\sigma}),
\] 
which again, by the above arguments, implies that $f \sim_{\mathcal
  A_{\infty}} g$. Altogether we have shown that
$[\as(f)]_E=[\as(g)]_E\Rightarrow f\sim_{\mathcal A_{\infty}}g$.

\emph{Step 3}.
Suppose that $f$ and $g$ are $\mathcal A_{\infty}$-equivalent. Then there are
diffeomorphisms $h$ and $k$ of $S^1$ such that $k\circ f=g\circ
h$. Since a singularity of Morse type is topologically different from
a regular germ, it 
is clear that $h$ maps $\Sigma(f)$ to $\Sigma(g)$, and that
$k$ maps $\Delta(f)$ to $\Delta(g)$, and it follows
that $f^{-1}(\Delta(f))$ is mapped onto $g^{-1}(\Delta(g))$ by $h$,
and hence, $N(f)=N(g)$. If $h$ is orientation preserving and
$h(s_1(f))=s_i(g)$, then $\as(g)=\rho\cdot \as(f)$ where
$\rho([j])=[j+i-1]$. If $h$ is orientation reversing, then $\as(g)=\rho'\cdot \as(f)$ where
$\rho'([j])=[i-j+1]$. It follows that $[\as(f)]_E=[\as(g)]_E$.  
\end{proof}

\subsection{Feasible tuples}
By Theorem \ref{T:MT1}, the problem of listing all topological
equivalence classes of smooth stable maps $S^1\to S^1$ corresponds to
the problem of listing all $E_n$-equvalence classes of associated
tuples to such maps. Every non-singular map $f:S^1\to S^1$ is clearly equivalent to
the map $e^{i\theta}\mapsto e^{in\theta}$ where $n=\#f^{-1}(1)$. To
generate such a list for maps with singularities, we will make use of
another version of our tuples. If $f:S^1\to S^1$ is a smooth stable
map with $\Sigma(f)\ne \emptyset$, then $[\as (f)]_E$ may be represented by a tuple in
$\{s,p\}^{N(f)}$ 
having an $s$ as the last component. This may be done in several
different ways. Let $A'$ be such a representation. Let
$\rho:\{1,\ldots,n(f)\}\to \{1,\ldots,N(f)\}$ be such that $\rho$ is
increasing and $A'_{\rho(i)}=s$. Set $\rho(0)=0$ and let
$c_i=\rho(i)-\rho(i-1)-1$ for $i=1,\ldots, n(f)$. Define
\[
\as^{\#}(A')=(c_1,c_2,\ldots, c_{n(f)})\in \mathbb N_0^{n(f)}.
\]
Let 
\[
\as^{\#}(f)=[\as^{\#}(A')]_E
\]
 where $A'_{n(f)}=s$. It is not difficult to see that this definition
 of $\as^{\#}(f)$ is unambigious.

\begin{remark}\label{R:1}
Clearly, Theorem \ref{T:MT1} is still valid for maps with singularities if we replace $\as$ with
$\as^{\#}$. 
\end{remark}

Given an element $(x_1,x_2,\ldots, x_n)\in \mathbb N_0^n$, we want to
determine whether or not there is a smooth stable map $f:S^1\to S^1$
such that 
\[
\as^{\#}(f)=[(x_1,\ldots, x_n)]_E.
\]
Let $f$ be a stable map with $\as^{\#}(f)=[(x_1,\ldots, x_n)]_E$.
We say that $f$ is of type $(n,m)$ if $n(f)=n$ and $N(f)-n(f)=m$.
Thus, if $f$ is of type
$(n,m)$, then $n$ is an even number and
\[
x_1+x_2+\cdots +x_n=m.
\]
These two properties arise from observing that $f$ has an even number
of singular points, and that $N(f)-n(f)$ is the number of regular
preimage points of the discriminant set. Another property of $f$ is
that $f$
restricted to its singular set is injective, and this fact should be
reflected in $[(x_1,\ldots, x_n)]_E$. Indeed, the curve $P(x)$,
$x\in [0,2\pi)$, passes $x_i$ points in $f^{-1}(\Delta(f))\setminus
\Sigma(f)$ when $x$ runs through $I=[P^{-1}(s_{i-1}),P^{-1}(s_i))$. Therefore, the curve
$f(P(x))$ passes $x_i$ singular values in the same interval of
parameters. Thus, if $\sigma_{i-1}=P(b_j)$ and $f$ is orientation
preserving on $P(\mbox{int } I)$ and 
\[
k=(\mbox {remainder of the division $x_i$ by $n$})+1,
\]
then $\sigma_i=P(b_{j+k})$. 

In general, let $R:\mathbb Z\to
\{1,2,\ldots n\}$ be given by $R(x) = 
(\mbox{remainder of the division $x$ by $n$})+1$. Let $\tau\in S_n$ be as in
the proof of Theorem \ref{T:MT1}, i.e. such that $P(b_i)=\sigma_{\tau(i)}$. Assuming that
$(x_1,x_2,\ldots,x_n)=\as^{\#}(\as(f))$, $\sigma_1=P(b_j)$ and that
$f$ is orientation reversing on $P(P^{-1}(s_1),P^{-1}(s_2))$, then we see that
\[
\sigma_k=\sigma_{\tau(R(j-x_1-1+\sum_{i=1}^k(-1)^{i+1}[x_i+1]))}
\]
for $k=1,\ldots, n$. Moreover, since we chose representatives with $s$
in the last component in the definiton of $\as^{\#}$, we have
\[
\sigma_n=P(b_{R(j-x_1-1)}).
\]
In order for all these equations to be satisfied, the set 
\[
R'=\left\{ \sum_{i=1}^k(-1)^{i+1}[x_i+1]\,;\, k=1,\ldots,n\right\}
\]
has to be a complete remainder system modulo $n$, i.e., the canonical
map $R'\to\mathbb Z/n\mathbb Z$ is surjective. Furthermore, 
\[
 \sum_{i=1}^n(-1)^{i+1}[x_i+1]\equiv 0\mod n.
\]

\begin{definition}
An element $A=(x_n,\ldots,x_n)\in \mathbb N_0^n$ is feasible of type
$(n,m)$ if $n$ is an even number and the following condtions are satisfied:
\begin{enumerate}
\item $\sum_{i=1}^n x_i =m$.
\item $\sum_{i=1}^n (-1)^{i+1}x_i\equiv 0\mod n$.
\item $\left\{ \sum_{i=1}^k(-1)^{i+1}[x_i+1]\,;\, k=1,\ldots,n\right\}$
is a complete remainder system modulo $n$.
\end{enumerate}
\end{definition}

\begin{remark}
There are no feasible tuples of type $(n,m)$ if $m$ is odd, because
the numbers $m=\sum_{i=1}^n x_i$ and $\sum_{i=1}^n
(-1)^{i+1}x_i$ have the same parity, and by 2 in the definition, the
latter number is even, since $n$ is even.
\end{remark} 

\begin{proposition}\label{P:mn4}
There are no feasible tuples of type $(n,m)$ if $n\equiv 0 \mod 4$ and
$m\equiv 2\mod 4$.
\end{proposition}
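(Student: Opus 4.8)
The plan is to argue by contradiction with a short parity count. Suppose $A=(x_1,\ldots,x_n)$ is feasible of type $(n,m)$ with $n\equiv 0\pmod 4$ and $m\equiv 2\pmod 4$, and set $S_k=\sum_{i=1}^k(-1)^{i+1}[x_i+1]$ for $k=1,\ldots,n$, so that condition (3) in the definition of feasibility says exactly that $S_1,\ldots,S_n$ represent a complete residue system modulo $n$. I would compute the number of indices $k$ for which $S_k$ is odd in two incompatible ways: once from condition (3), and once directly from the tuple using conditions (1) and (2).

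For the first count, note that since $n$ is even, $S_k$ and $S_k\bmod n$ have the same parity, and a complete residue system modulo the even number $n$ contains exactly $n/2$ odd residues; hence exactly $n/2$ of the $S_k$ are odd, and because $4\mid n$ this number $n/2$ is even. For the second count, observe that the parity of $\#\{k:S_k\text{ odd}\}$ equals $\sum_{k=1}^n S_k\bmod 2$. Expanding (the term with index $i$ occurs for $k=i,\dots,n$) gives $\sum_{k=1}^n S_k=\sum_{i=1}^n(n-i+1)(-1)^{i+1}(x_i+1)$; reducing modulo $2$, where $(-1)^{i+1}\equiv 1$ and $n-i+1\equiv i+1$ since $n$ is even, the odd-index terms drop out and one is left with $\sum_{k=1}^n S_k\equiv\sum_{i\text{ even}}(x_i+1)=E+\tfrac n2\equiv E\pmod 2$, where $E=\sum_{i\text{ even}}x_i$ and $\tfrac n2$ is even. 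Finally I would check that $E$ is odd: writing $O=\sum_{i\text{ odd}}x_i$, condition (1) gives $O+E=m$ and condition (2) gives $O-E\equiv 0\pmod n$, hence $O-E\equiv 0\pmod 4$ because $4\mid n$, so $2E=m-(O-E)\equiv m\equiv 2\pmod 4$ and $E$ is odd. Thus $\#\{k:S_k\text{ odd}\}$ is odd, contradicting that it equals the even number $n/2$.

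I do not expect a genuine obstacle here — the whole point is simply to single out the right statistic, namely the number of odd partial sums $S_k$. The only thing requiring care is that both hypotheses are used, and in different places: $4\mid n$ is what lets one pass from $O-E\equiv0\pmod n$ to $O-E\equiv 0\pmod 4$ and also what makes $n/2$ even, whereas $m\equiv 2\pmod 4$ is precisely what forces $E$ (equivalently $O$) to be odd. With $m\equiv 0\pmod 4$ instead, the same computation yields $E$ even and the parities reconcile, in agreement with the fact that feasible tuples of type $(n,m)$ do exist in that case.
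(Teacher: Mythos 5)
Your proof is correct, and it takes a genuinely different, purely mod-2 route from the paper's. The paper establishes the exact integer identity $2\sum_{i=1}^{n-1}(-1)^{i+1}L_i - L_n = m+n$ (with $L_k=\sum_{i=1}^{k}(-1)^{i+1}(x_i+1)$, your $S_k$), combines it with $2\sum_{k=1}^{n}L_k\equiv 0\pmod n$ coming from the complete-residue-system condition, and arrives at the congruence $4(L_1+L_3+\cdots+L_{n-1})\equiv m\pmod n$, from which $4\mid n\Rightarrow 4\mid m$ is immediate. You instead double-count the single statistic $\#\{k: S_k\ \text{odd}\}$: condition (3) forces it to equal $n/2$, hence to be even when $4\mid n$, while the weighted expansion $\sum_{k}S_k=\sum_{i}(n-i+1)(-1)^{i+1}(x_i+1)$ reduced mod 2, together with conditions (1) and (2), forces its parity to agree with that of $E=\sum_{i\ \text{even}}x_i$, which is odd exactly when $m\equiv 2\pmod 4$ (given $4\mid n$). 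Both arguments exploit the same three feasibility conditions and both hinge on summing the partial sums against suitable weights, but the paper's choice of weights yields a sharper congruence modulo $n$ that could carry extra information, whereas your unit weights keep everything at the level of parity; this makes the contradiction arguably more transparent and, as you observe, explains cleanly why the obstruction vanishes when $m\equiv 0\pmod 4$, consistent with the existence of feasible tuples in that case.
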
 
\begin{proof}
Assume that $(x_1,\ldots, x_n)$ is feasible of type $(n,m)$. 
Let $L_k=\sum_{i=1}^k (-1)^{i+1}(x_i+1)$. Notice that 
\[
2\left(\sum_{i=1}^{n-1}(-1)^{i+1}L_i\right) -L_n = x_1+\cdots+x_n+n=m+n.
\]
Since $L_n\equiv 0 \mod n$,
\begin{equation}\label{E:mod1}
2\sum_{i=1}^{n-1}(-1)^{i+1}L_i\equiv m \mod n.
\end{equation}
Since $\{L_k;k=1,\ldots,n\}$ is a complete remainder system modulo
$n$, we have
\begin{equation}\label{E:mod2}
2\sum_{i=1}^nL_i \equiv 2\sum_{i=0}^{n-1} i \equiv n(n-1) \equiv 0
\mod n.
\end{equation}
Addition of \eqref{E:mod1} and \eqref{E:mod2} yields
\begin{equation}\label{E:mod3}
4(L_1+L_3+L_5+\cdots + L_{n-1}) \equiv m \mod n.
\end{equation}
Hence, there is an integer $K$ such that 
\[
4(L_1+L_3+L_5+\cdots + L_{n-1})-m=Kn.
\]
It follows that $4|n\Rightarrow 4|m$.
\end{proof}

The next theorem justifies the  term 'feasible tuple'.
\begin{theorem}\label{T:tup1}
Let $A\in \mathbb N_0^n$. There exists a smooth stable map $f:S^1\to
S^1$ with $\as^{\#}(f)=A$ if and only if $A$ is feasible of type
$(n,m)$ for some number $m$.
\end{theorem}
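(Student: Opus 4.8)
The plan is to prove the two directions separately. The "only if" direction is essentially already contained in the discussion preceding the definition of feasibility: given a smooth stable $f:S^1\to S^1$ with $\as^{\#}(f)=A=(x_1,\ldots,x_n)$, one reads off that $n=n(f)$ is even (a stable map of the circle has an even number of fold points, since the orientation behaviour alternates and must return to its starting value after one full loop), that $\sum x_i=N(f)-n(f)=m$ by definition of $\as^{\#}$, and that the tracking of the singular values $\sigma_k$ along the parametrization $P$ forces both the congruence $\sum(-1)^{i+1}x_i\equiv 0 \bmod n$ and the complete-remainder-system condition, exactly as computed above using the map $R$ and the permutation $\tau$ with $P(b_i)=\sigma_{\tau(i)}$. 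So for that direction I would just assemble those observations into a clean statement.

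For the "if" direction I would construct $f$ explicitly from a feasible tuple $A=(x_1,\ldots,x_n)$ of type $(n,m)$. First I would lay down $N=n+m$ points $a_1<\cdots<a_N$ in $[0,2\pi)$ to serve as $A(f)$, marking as "fold-type" those in positions $\rho(1)<\cdots<\rho(n)$ determined by the $c_i$'s of $\as^{\#}$ (i.e. $x_i$ regular marked points between consecutive fold marks), and marking the rest as "regular double-point-type". Next I would use conditions (2) and (3) to define the target data: choose $n$ points $b_1<\cdots<b_n$ in $[0,2\pi)$ (the prospective $\Delta(f)$) and, running the recursion $\sigma_k=\sigma_{\tau(R(\cdots))}$ from the discussion, assign to each marked source point a target value, where condition (3) guarantees this assignment hits each $b_j$ exactly once on the fold points (so $f$ is injective on its singular set) and condition (2) guarantees the recursion closes up consistently after one loop. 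Then I would build $f$ on each arc between consecutive marked points as an orientation-preserving or orientation-reversing diffeomorphism onto the appropriate arc of the target circle, switching orientation exactly at the fold marks, and smoothing each fold into a nondegenerate (Morse, i.e. quadratic) critical point using a standard bump-function interpolation so that $f$ is globally $C^\infty$. Finally I would verify that the resulting $f$ is stable — only Morse singularities by construction, $\Sigma(f)$ finite, and no singular double points because the target values at distinct fold points are distinct by (3) — and that $\as^{\#}(f)=A$, which holds by construction of the marked points and the orientation pattern (using Remark \ref{R:tup} to see that the positions of the $p$'s are forced once the $s$-pattern and orientation data are fixed).

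The main obstacle is the bookkeeping in the target assignment: one must check that the recursion defining the $\sigma_k$ is \emph{consistent} — that the $x_i$ regular preimages sitting in the arc of source between $s_{i-1}$ and $s_i$ really can be mapped so that $f\circ P$ sweeps through exactly $x_i$ (counted mod $n$, with the right multiplicity structure) of the target fold-values in that parameter interval, and that after all $n$ arcs the map returns coherently to where it started. This is precisely where conditions (2) and (3) are used, and translating "feasibility" into an actual consistent combinatorial layout of preimages — essentially inverting the computation that produced the conditions — is the delicate part. Once the layout is fixed, the differential-topological step of realizing it by an honest smooth stable map is routine (local models for folds, partition of unity, monotone interpolation on arcs), so I would keep that part brief and concentrate the write-up on the combinatorial construction and the verification $\as^{\#}(f)=A$.
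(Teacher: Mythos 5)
Your proposal is correct in outline and follows essentially the same route as the paper: the forward implication is read off from the discussion preceding the definition of feasibility, and the converse is an explicit construction realizing the combinatorial data, with condition (3) making the $n$ singular values pairwise distinct (hence no singular double points) and condition (2) making the construction close up after one loop around the circle. The one step you should not dismiss as routine is the gluing: the paper explicitly warns that a standard partition of unity cannot be used here, because a convex combination $\phi f+(1-\phi)g$ of two monotone pieces has derivative $\phi f'+(1-\phi)g'+\phi'(f-g)$, and the last term can create unwanted critical points and destroy stability. The paper circumvents this with a single global formula rather than an arc-by-arc assembly: a piecewise-linear lift of slope $\pm 1$ whose corners sit at the heights $Y_k=\sum_{i\le k}(-1)^{i+1}(x_i+1)$ and are each replaced by an explicit smooth Morse cap $l$; after rescaling so that the target circle has circumference $n$, the $x_k$ regular preimages of singular values on the $k$-th arc appear automatically as the interior integer-height crossings, so no separate combinatorial layout of the $p$-points or target assignment recursion is needed. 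Your arc-by-arc scheme can be made to work, but the interpolation must be arranged to preserve monotonicity away from the designated folds (for instance by interpolating derivatives with positive weights rather than interpolating values), and that is precisely the detail the paper isolates as the non-routine part of the construction.
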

\begin{proof}
The forward implication follows from the above discussion. For the
other implication, let $A=(x_1,\ldots,x_n)\in \mathbb N_0^n$ be a feasible tuple of
type $(n,m)$. We need to construct a smooth stable map $f:S^1\to
S^1$ with $\as^{\#}(f)=A$. We construct a smooth map $f_A:[0,2\pi)\to \rr$ such
that $f=P\circ f_A\circ P^{-1}$ is smooth and stable and
satisfies $\as^{\#}(f)=A$. It is natural to define $f_A$ to
consist of linesegments ouside some small open intervals about the
singular points and consist of a modified parabel around the singular
points. This strategy calls for some kind
of gluing process, but we can not use a standard partition of unity,
because we must have full control over the singularities of $\tilde
f$, and a partition of unity might introduce unwanted
singularites. Instead, we will construct $f_A$ explicitly, using
smooth ``bump functions'' to glue the different parts of the function together.

Let 
\[
j(x)=\begin{cases}e^{-(x-1)^{-2}}\cdot e^{-(x+1)^{-2}},&x\in
(-1,1)\\\phantom{e^{-(x-1)^{-2}}} 0,&\mbox{otherwise}\end{cases}
\]
and let
\[
k(x)=\frac{\int_{-1}^xj(t)dt}{\int_{-1}^1j(t)dt}.
\]
Define
\[
l(x)=\begin{cases}x,&x\leq -1\\x-2xk(x),&x\in (-1,1)\\-x,&x\geq 1.\end{cases}
\]
Then
\[
l'(x)=\begin{cases}1,&x\leq -1\\1-2k(x)-2xk'(x),&x\in (-1,1)\\-1,&x\geq 1,\end{cases}
\]
and
\[
l''(x)=\begin{cases}0,&x\leq -1\\-4k'(x)-2xk''(x),&x\in (-1,1)\\0,&x\geq 1,\end{cases}
\]
Since $k$ is flat at $-1$ and $1$, $l$ is a $C^{\infty}$ function on
$\rr$. Also, $l$ is increasing for $x\le 0$ and decreasing for $x\geq
0$. Since $l'(0)=0$ and $l''(0)=-4k'(0)<0$, this means that $l$ has
its only extreme point at $x=0$ and this is a global maximum and a
Morse singularity. The definition of $f_A$ is the following. For
$k=1,\ldots, n$, let
\[
\begin{aligned}
X_k&=\sum_{i=1}^k(x_i+1)\\
Y_k&=\sum_{i=1}^k(-1)^{i+1}(x_i+1)\\
J_k&=[X_k-\frac 12,X_k+\frac 12)
\end{aligned}
\]
Let 
\[
I_0=[0,X_1-\frac 12)
\]
and for $k=1,\ldots,n-1$ let
\[
I_k=[X_k+\frac 12,X_{k+1}-\frac 12).
\]
For a set $B\in \rr$, let $\chi_B$ be the corresponding characteristic
function which is 1 on $B$ and 0 elsewhere. Put $X_0=Y_0=0$. For $k=1,\ldots n$,
let
\[
\begin{aligned}
F_k(x)&=\left[Y_{k-1}+(-1)^{k-1}(x-X_{k-1})\right]\chi_{I_{k-1}}\\
G_k(x)&=\left[Y_k+\frac{(-1)^{k+1}}2l(2(x-X_k))\right]\chi_{J_k}.
\end{aligned}
\]
Let 
\[
H(x)=\sum_{i=1}^n(F_k(x)+G_k(x)).
\]
Finally, let 
\[
f_A(x)=\frac{2\pi}nH\left(\frac{X_n}{2\pi}x+\frac 12\right).
\]
With this definition of $f_A$, let $f=P\circ f_A\circ P^{-1}$.
It is messy, but straight forward to see that $f$ is smooth and
that $\as^{\#}(f)=A$.
\end{proof}

Let $f$ be a smooth stable map of the circle. All the topological
properties of $f$ is coded in $\as^{\#}$. We show how $|\deg f|$ can be retrieved from $\as^{\#}(f)$.

\begin{proposition}\label{P:deg}
Let $\as^{\#}(f)=(x_1,x_2,\ldots,x_n)$. Then
\[
|\deg f| =\left|\frac 1n \sum_{i=1}^n (-1)^{i+1}x_i\right|.
\]
\end{proposition}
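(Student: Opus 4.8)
The plan is to relate the degree of $f$ directly to the behaviour of the lift $f_A : [0,2\pi) \to \RR$ constructed (or implicitly available) in the proof of Theorem~\ref{T:tup1}, and to count how much this lift winds. Recall that $\deg f$ equals the number of times the image curve $t \mapsto f(P(t))$ wraps around $S^1$ as $t$ runs over $[0,2\pi)$, counted with sign; equivalently, if $\widehat f : \RR \to \RR$ is a continuous lift of $t \mapsto P^{-1}(f(P(t)))$ (suitably interpreted across the branch cut), then $\deg f = \frac{1}{2\pi}(\widehat f(2\pi) - \widehat f(0))$. So the task is to express the net displacement of the lift in terms of the tuple $(x_1,\dots,x_n)$.

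First I would set up the bookkeeping. Between consecutive singular points $s_{i-1}$ and $s_i$ of $f$ (indices cyclic), the map $f$ is a local diffeomorphism onto its image, monotone in one direction, and it is orientation preserving or orientation reversing depending on the parity of $i$ (the sign alternates at each Morse singularity, exactly as used in Remark~\ref{R:tup} and in the feasibility discussion). On this $i$-th arc the image curve $f(P(\cdot))$ passes through exactly $x_i$ regular preimages of discriminant values plus the endpoints $\sigma_{i-1},\sigma_i$; in terms of arc length on the target circle this arc of the image subtends a signed number of full discriminant-gaps proportional to $(-1)^{i+1}(x_i+1)$, which is precisely the quantity $Y_k - Y_{k-1}$ appearing in the construction of $f_A$ via the partial sums $Y_k = \sum_{i=1}^k (-1)^{i+1}(x_i+1)$. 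Summing the signed contributions over all $n$ arcs, the total signed displacement of the lift is, up to the overall normalizing constant $2\pi/n$ in the definition of $f_A$, equal to $\sum_{i=1}^n (-1)^{i+1}(x_i+1) = \sum_{i=1}^n (-1)^{i+1}x_i$ (the $+1$ terms cancel in pairs since $n$ is even). Dividing by $n$ and taking absolute value — the absolute value absorbs both the choice of counter-clockwise vs.\ clockwise traversal and the ambiguity in which singular arc one calls the first — gives $|\deg f| = \left|\frac{1}{n}\sum_{i=1}^n (-1)^{i+1} x_i\right|$.

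Concretely I would carry this out by computing $\deg f$ from the explicit model: for the standard representative $f = P \circ f_A \circ P^{-1}$ of Theorem~\ref{T:tup1}, $\deg f = \frac{1}{2\pi}\big(f_A(2\pi^-) - f_A(0)\big)$ after accounting for the closing-up, and from $f_A(x) = \frac{2\pi}{n} H\!\left(\frac{X_n}{2\pi}x + \frac12\right)$ together with $H = \sum_k (F_k + G_k)$ one reads off that the net change of $H$ over a full period is $Y_n = \sum_{i=1}^n (-1)^{i+1}(x_i+1) = \sum_{i=1}^n(-1)^{i+1}x_i$. Hence $\deg f = \frac{1}{n}\sum_{i=1}^n (-1)^{i+1} x_i$ for that model, and since $|\deg|$ is an $\mathcal A_0$-invariant while $\as^{\#}$ determines $f$ up to $\mathcal A_\infty$-equivalence (Theorem~\ref{T:MT1}, Remark~\ref{R:1}), the formula holds for every smooth stable $f$ with that tuple; the $E_n$-ambiguity in $\as^{\#}(f)$ changes $\sum (-1)^{i+1}x_i$ only by sign (cyclic shifts by an even amount preserve the alternating signs, odd shifts and the reversation negate the sum), which is why only $|\deg f|$ is well defined by the formula.

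The main obstacle is the orientation bookkeeping: one must verify carefully that the alternation of orientation-preserving/orientation-reversing arcs is correctly synchronized with the $(-1)^{i+1}$ sign convention used in defining $Y_k$, and that the ``$+1$'' contributions (one per gap, coming from crossing the endpoint singular values) genuinely cancel rather than contribute $n/2$ or similar — this is where the evenness of $n$ is essential. Everything else is a routine, if slightly tedious, winding-number computation on the explicit model $f_A$.
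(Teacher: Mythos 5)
Your proposal is correct and takes essentially the same route as the paper: both reduce to the explicit model $f_A$ from Theorem \ref{T:tup1} and read off the degree from the net displacement $Y_n=\sum_{i=1}^n(-1)^{i+1}(x_i+1)=\sum_{i=1}^n(-1)^{i+1}x_i$ of its lift (the paper phrases this as a straight-line homotopy of $f_A$ to the linear map of slope $\frac1n\sum(-1)^{i+1}x_i$). Your extra care in transferring the computation from the model to a general $f$ via $\mathcal A_\infty$-invariance of $|\deg|$ is a point the paper leaves implicit, but it is not a different argument.
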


\begin{proof}
Let $A=\as^{\#}(f)$, and let $f_A$ be as in the proof of Theorem
\ref{T:tup1}. Then $\deg(f)=\deg(p\circ f_A\circ P^{-1})$. Certainly,
$f_A$ is homotopic to $\tilde f_A$ given by 
\[
\tilde f_A(x)=\frac{1}n\left(\sum_{i=1}^n (-1)^{i+1}x_i\right)x.
\]
by the homotopy $F(x,t)=tf_A(x)+(1-t)\tilde f_A(x)$. Clearly, $p\circ
\tilde f_A\circ P^{-1}$ has degree $\frac 1n\sum_{i=1}^n
(-1)^{i+1}x_i$, and this finishes the proof.
\end{proof}


\subsection{Tables of feasible tuples}
A complete classification of smooth stable maps $S^1\to S^1$ can be
given by listing all the feasible tuples up to legal
permutations. This task is well suited for recursive computer
programming. Table \ref{Ta:Types1} and Table \ref{Ta:Types2} give
MATLAB generated 
lists of feasible tuples and numbers of topological types for
different $(n,m)$. 

\begin{table}[!]
\begin{center}
\begin{tabular}{|l|c|l|}\hline
$(n,m)$&\textbf{Number of topological types} &\textbf{Feasible tuples} \\ \hline\hline
$(4,4)$ & 2& $\begin{aligned}(1,2,1,0),\,(2,0,2,0)\end{aligned}$\\
  \hline
$(4,8)$ & 5& $\begin{aligned}&(5,2,1,0),\,(1,6,1,0),\,(2,4,2,0),\\&(6,0,2,0),\,(4,1,2,1)\end{aligned}$\\
  \hline
$(4,12)$ &12 & $\begin{aligned}&(9,2,1,0),\,(5,6,1,0),\,(1,10,1,0),\\&(6,4,2,0),\,(2,8,2,0),\,(5,2,5,0),\\&(8,1,2,1),\,(4,5,2,1),\,(6,1,4,1),\\&(10,0,2,0),\,(6,0,6,0),\,(4,2,4,2)\end{aligned}$\\
  \hline
$(6,6)$&1&$\begin{aligned}(2,0,2,0,2,0)\end{aligned}$\\\hline
$(6,8)$&2&$\begin{aligned}(3,1,0,3,1,0),\,(2,0,1,4,1,0)\end{aligned}$\\\hline
$(6,10)$&3&$\begin{aligned}&(3,0,4,2,1,0),\,(1,4,0,4,1,0),\\
&(3,1,2,1,3,0)\end{aligned}$\\\hline

\end{tabular}
\end{center} 
\caption{Table of topological types}\label{Ta:Types1}
\end{table} 

\begin{table}[!]
\begin{center}
\begin{tabular}{|c|c||c|c|}\hline
$(n,m)$&\textbf{Number of topological types} &$(n,m)$&\textbf{Number
    of topological types} \\ \hline\hline
$(4,16)$&21&$(8,8)$ & 1\\\hline
$(4,20)$&36&$(8,12)$ &12\\\hline
$(4,24)$&54&$(8,16)$ &34\\\hline
$(4,28)$&80&$(10,10)$ &1\\\hline
$(6,12)$ & 9& $(10,12)$&0\\\hline
$(6,14)$ & 10& $(10,14)$&3\\  \hline
$(6,16)$ &16 & $(10,16)$&6\\\hline
\end{tabular}
\end{center} 
\caption{Number of topological types}\label{Ta:Types2}
\end{table} 

Our tables lack the number of feasible tuples of type $(2,m)$ because
of the next proposition.

\begin{proposition}\label{P:top}
The number of $E_2$-equivalence classes of feasible tuples of type
$(2,m)$ is $\lfloor \frac m4\rfloor + 1$.
\end{proposition}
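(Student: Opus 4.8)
The plan is to translate the abstract definitions into a completely explicit combinatorial statement for $n=2$ and then count orbits directly. First I would spell out what feasibility of type $(2,m)$ means for a pair $(x_1,x_2)\in\mathbb N_0^2$. Condition (1) is $x_1+x_2=m$; condition (2) is $x_1-x_2\equiv 0\bmod 2$; and condition (3) asks that $\{\,x_1+1,\ (x_1+1)-(x_2+1)\,\}=\{\,x_1+1,\ x_1-x_2\,\}$ be a complete remainder system modulo $2$, i.e. that its two members have opposite parities. Since these two members differ by $x_2+1$, condition (3) is equivalent to $x_2$ being even; combined with condition (2) this is equivalent to both $x_1$ and $x_2$ being even, and then condition (1) forces $m=x_1+x_2$ to be even. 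Conversely every pair of nonnegative even integers with sum $m$ satisfies (1)--(3). Thus, writing $m=2j$, the feasible tuples of type $(2,m)$ are exactly the $j+1$ pairs $(2a,\,2(j-a))$ for $a=0,1,\dots,j$.

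Next I would pin down the group $L_2$ acting on these tuples. The reversation $r([x])=[-x]$ acts as the identity on $\mathbb Z/2\mathbb Z$, so $R_2=\{\mbox{id}\}$, while $Sw_2$ consists of the two maps $[x]\mapsto[x]$ and $[x]\mapsto[x+1]$, that is, $Sw_2=S_2$. Hence $L_2=S_2$, and unwinding the definition of the action the relation $E_2$ on $\mathbb N_0^2$ simply identifies $(x_1,x_2)$ with $(x_2,x_1)$.

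It then remains to count. Parametrizing the feasible tuples of type $(2,2j)$ by $a\in\{0,1,\dots,j\}$ via $a\mapsto(2a,2(j-a))$, the relation $E_2$ corresponds to the involution $a\mapsto j-a$ on $\{0,1,\dots,j\}$. This involution has a fixed point exactly when $j$ is even (namely $a=j/2$); hence the number of orbits is $j/2+1$ if $j$ is even and $(j+1)/2$ if $j$ is odd. Writing $j=m/2$ and separating the cases $m\equiv 0\bmod 4$ and $m\equiv 2\bmod 4$, one checks that in both cases this number equals $\lfloor m/4\rfloor+1$, as claimed.

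The argument is entirely elementary, so there is no serious obstacle; the two points that need a little care are the reduction of conditions (1)--(3) to the single assertion that both entries of the tuple are even, and the parity bookkeeping in the final orbit count that is responsible for the floor function in the answer.
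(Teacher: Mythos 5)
Your proof is correct and follows essentially the same route as the paper's: reduce conditions (1)--(3) to the statement that both entries are even, list the $\frac m2+1$ feasible tuples, and count orbits under the transposition $(x_1,x_2)\mapsto(x_2,x_1)$. The only (welcome) addition is your explicit check that $L_2=S_2$ acts by swapping the two coordinates, a point the paper uses without comment; the final parity bookkeeping is the same in both arguments.
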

\begin{proof}
Assume $(x_1,x_2)$ is feasible of type $(2,m)$. Then
\[
\begin{aligned}
x_1+1&\equiv 1 \mod 2\\
x_1-x_2&\equiv 0 \mod 2.
\end{aligned}
\]
These equations are satisfied if and only if $x_1$ is even and
$x_1$ and $x_2$ have the same parity. The feasible tuples of type
$(2,m)$ are therefore $\{(2i,m-2i); i=0,1,\ldots, \frac m2\}$. There
are $\frac m2+1$ elements in this set, and $(2i, m-2i)\sim_{E_2}
(m-2i,2i)$ for all $i$. If $m=4k$ for some $k\in \mathbb N$, then
$\frac m2+1=2k+1$ is odd, and the number of $E_2$-equivalence classes
is $k+1=\lfloor \frac m4 \rfloor+1$. If $m=4k+2$, then $\frac m2+1=
2k+2$ is even, and the number of equivalence classes is still
$k+1=\lfloor \frac m4\rfloor +1$.
\end{proof}


\section{Classification of finitely determined real analytic map germs $(\re
  2,0)\to (\re 2,0)$}\label{S:class2}

Let $\mathcal O=\mathcal O(2,2)$ be the set of real analytic map germs $(\re
  2,0)\to (\re 2,0)$. Let $\mathcal O_{\mbox g}=\mathcal O_{\mbox g}(2,2)\subset
  \mathcal O(2,2)$   be the set of finitely determined map germs. By
  Theorem 0.5 of \cite{duPlessis}, finite determinacy holds in general
  in $\mathcal O(2,2)$. 

\subsection{Geometric properties}\label{S:geo}
Finitely determined real analytic plane-to-plane germs have the following 
well known geometric properties.

\begin{proposition}\label{P:1}
For every $f\in \mathcal O_{\mbox g}$ there is an open neighbourhood $U$ of
0 in $\re 2$ and a real analytic representative of $f$, $\hat f:U\to
\re 2$ such that 
\begin{enumerate}
\item $\hat f^{-1}(0) = \{0\}$,
\item $\hat f|(\Sigma(\hat f)\setminus \{0\})$ is injective,
\item every $p\in\Sigma(\hat f)\setminus \{0\}$ is a fold point.
\end{enumerate}
\end{proposition}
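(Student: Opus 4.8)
The plan is to extract a good representative of $f$ by combining finite determinacy with the classical theory of singularities of plane-to-plane maps, using that finitely determined germs are (in particular) of finite singularity type so that the Malgrange--Mather finiteness results and the structure of the local ring apply. First I would recall that for a finitely determined germ $f \in \mathcal{O}_{\mathrm{g}}$, the critical set $\Sigma(\hat f)$ of a small enough real analytic representative $\hat f : U \to \re 2$ is, by the Jacobian equation $\det D\hat f = 0$, a real analytic curve through $0$; finite determinacy forces $\Sigma(\hat f)$ to be one-dimensional (the germ cannot be a submersion, and a two-dimensional critical set would make $f$ infinitely degenerate), so after shrinking $U$ we may take $\Sigma(\hat f)$ to be a one-dimensional analytic set. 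Since the only stable singularities of maps $\re 2 \to \re 2$ are folds and cusps (Whitney), and a finitely determined germ has at most finitely many cusp points in any sufficiently small representative — indeed, away from $0$ a finitely determined representative is locally stable, hence locally a fold or cusp — one shrinks $U$ further so that the only possible cusp sits at $0$ itself. Then finite determinacy is used once more: a cusp at the origin is itself a finitely determined (in fact $3$-determined) singularity, but the cusp germ is not $\mathcal{A}$-equivalent to a germ whose critical set maps injectively near $0$; more to the point, for the argument it suffices to invoke the standard fact (e.g.\ via the Gaffney--du Plessis theory cited, or Mather's characterization of finite determinacy by the infinitesimal stability of a representative) that a finitely determined plane-to-plane germ has a representative all of whose singularities off the origin are folds, which is exactly (3).

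For item (1), the point is that $f^{-1}(0)$ is a real analytic set germ at $0$; I would argue it is zero-dimensional, hence equal to $\{0\}$ after shrinking, because a positive-dimensional fibre over $0$ would again contradict finite determinacy (the Jacobian ideal together with $f_1, f_2$ would fail to have finite colength, since $\mathcal{O}_2/(f_1,f_2)$ must be finite-dimensional for a finitely determined — in particular $\mathcal{K}$-finite — germ, forcing $V(f_1,f_2)=\{0\}$ as a germ). One then picks a representative small enough that this germ-level statement becomes the set-level statement $\hat f^{-1}(0) = \{0\}$ on $U$.

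For item (2), injectivity of $\hat f$ on $\Sigma(\hat f)\setminus\{0\}$, I would use that $\Sigma(\hat f)\setminus\{0\}$ is a finite disjoint union of real analytic arcs accumulating only at $0$, that on each such arc (being a fold curve) $\hat f$ is an immersion with one-dimensional image, and that a finitely determined germ has no singular double points in a small representative: double points of the restriction $\hat f|\Sigma(\hat f)$ would be (generically transverse) multi-germ phenomena, and finite determinacy of the multi-germ obtained by iterating $f$ at such a pair forces, after shrinking $U$, that no two distinct critical points off the origin share an image. Concretely I would phrase this as: the set of pairs $(p,q)\in (\Sigma(\hat f)\setminus\{0\})^2$ with $p\neq q$ and $\hat f(p)=\hat f(q)$ is a real analytic set whose germ at $(0,0)$ must be empty by finite determinacy (otherwise one produces an infinite-codimension degeneracy in the jet of $f$), so it is empty on a small enough $U$.

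The main obstacle will be item (2): unlike (1) and (3), which follow rather directly from finiteness of the local algebra and from Whitney's classification of stable plane singularities, controlling the \emph{global} behaviour of the fold curve in a neighbourhood — ruling out that two far-apart branches of $\Sigma(\hat f)$ collide under $\hat f$ arbitrarily close to $0$ — requires the multi-germ version of finite determinacy and a careful shrinking argument, and it is here that one genuinely needs the du Plessis finite-determinacy machinery rather than just $\mathcal{K}$-finiteness. I expect the write-up of (2) to be the technical heart of the proof, with (1) and (3) essentially bookkeeping about shrinking $U$.
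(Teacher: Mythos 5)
Your overall architecture is reasonable, and it departs from the paper in one genuinely interesting way: the paper proves (2) and (3) simply by citing Lemma 6.2 of \cite{BroSku} (with ``semianalytic'' substituted for ``semialgebraic''), and then deduces (1) \emph{from} (2) and (3) by a two-line Curve Selection Lemma argument --- if $\hat f^{-1}(0)\setminus\{0\}$ accumulated at $0$, curve selection would give an analytic arc on which $\hat f\equiv 0$; that arc lies in $\Sigma(\hat f)\setminus\{0\}$ and is collapsed to a point, contradicting (2) and (3). Your independent, algebraic proof of (1) via $\mathcal K$-finiteness and finite colength of $(f_1,f_2)$ is a legitimate alternative, but note that it needs complexification: for real germs $V(f_1,f_2)=\{0\}$ does not imply finite colength, so the implication must be run in the direction $\mathcal K$-finite $\Rightarrow f_{\mathbb C}^{-1}(0)=\{0\}\Rightarrow$ finite colength $\Rightarrow V_{\mathbb R}(f_1,f_2)=\{0\}$. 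Also a small slip: a finitely determined germ certainly can be a submersion (a local diffeomorphism is $1$-determined); what you mean is that $\det D\hat f\not\equiv 0$, so that $\Sigma(\hat f)$ is empty or a curve.

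The genuine gap is in (2). You assert that ``finite determinacy of the multi-germ obtained by iterating $f$ at such a pair forces\dots that no two distinct critical points off the origin share an image.'' That mechanism fails: a transverse crossing of two fold curves is a \emph{stable} (hence finitely determined) bigerm, and it has two distinct singular points with the same image. So stability or finite determinacy of the individual bigerm at a double point cannot exclude the double point. What actually has to be shown is that double points cannot occur in \emph{every} neighbourhood of $0$: if they did, the semianalytic set $D=\{(p,q)\in(\Sigma(\hat f)\setminus\{0\})^2 \mid p\ne q,\ \hat f(p)=\hat f(q)\}$ would accumulate at $(0,0)$, the Curve Selection Lemma would produce an analytic arc inside $D$, and \emph{that} forces two branches of the discriminant to coincide as curve germs at $0$; it is this coincidence along a whole arc (an infinite-codimension, non-stable multigerm phenomenon), not the individual double point, that contradicts finite determinacy via the stability-off-the-origin criterion. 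Your alternative phrasing (``the germ of the double point set at $(0,0)$ must be empty by finite determinacy, otherwise one produces an infinite-codimension degeneracy'') asserts the conclusion but omits exactly this step, which is the content of the cited Lemma 6.2 of \cite{BroSku} and is indeed, as you predicted, the technical heart of the proof.
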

\begin{proof}
The proof of (2) and (3) goes as the proof of Lemma 6.2 in
\cite{BroSku} with semianalytic substituted for semialgebraic. To
prove (1), note that $\hat f^{-1}(0)\setminus \{0\}$ is a semianalytic
set. If 0 is in its closure, then by the Curve Selection Lemma, there
is a real analytic curve $\gamma:[0,\epsilon)\to \re 2$ with
$\gamma(0)=0$, $\gamma(0,\epsilon)\in \hat f^{-1}(0)\setminus
\{0\}$. Hence, $\hat f$ is identically 0 along $\gamma$, but this
contradicts both (2) and (3).
\end{proof}

For the rest of this section, let $f\in \mathcal O_{\mbox g}$, let
$U$ be a small ball around 0 and let
$\hat f:U\to \re 2$ be a real analytic representative of $f$ such that
(1)-(3) of Proposition \ref{P:1} hold.

\begin{lemma}\label{L:fini}
If $U$ is small enough, then $\Sigma(\hat f)\setminus \{0\}$ is empty
or a 1-dimensional
manifold which has only finitely many
topological components. 
\end{lemma}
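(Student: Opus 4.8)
The plan is to use the fact that $\hat f$ is real analytic together with the finite determinacy hypothesis to control $\Sigma(\hat f)$ near the origin. First I would recall that $\Sigma(\hat f)$ is the zero set of the Jacobian determinant $J\hat f = \det D\hat f$, which is a real analytic function on $U$. Hence $\Sigma(\hat f)$ is a real analytic subset of $U$, and in particular a semianalytic set. By shrinking $U$ if necessary we may invoke local finiteness of semianalytic sets: $\Sigma(\hat f)$ has only finitely many connected components in a small enough ball, and each component is a semianalytic set. This already gives the ``finitely many topological components'' part of the statement, once we know each component is $1$-dimensional (or we shrink so that the origin is the only $0$-dimensional stratum).

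Next I would show that $\Sigma(\hat f)\setminus\{0\}$ is a $1$-dimensional manifold. The key input is part (3) of Proposition \ref{P:1}: every point $p\in\Sigma(\hat f)\setminus\{0\}$ is a fold point. At a fold point the singular set is, in suitable local coordinates, a smooth curve (this is the standard normal form $(x,y)\mapsto(x,y^2)$, whose critical set is the line $\{y=0\}$); equivalently, $d(J\hat f)$ is nonzero at $p$ and its kernel is transverse to $\ker D\hat f_p$, so the implicit function theorem makes $\Sigma(\hat f)$ a $1$-dimensional submanifold near $p$. Therefore $\Sigma(\hat f)\setminus\{0\}$ is covered by charts in which it is a $1$-manifold, hence is a $1$-manifold (possibly empty). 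Combining with the previous paragraph, $\Sigma(\hat f)\setminus\{0\}$ is either empty or a $1$-dimensional manifold with finitely many components.

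The remaining point — and the one I expect to require the most care — is ruling out that $\Sigma(\hat f)$ could be $0$-dimensional but nonempty away from the origin, or that it could have infinitely many components accumulating at $0$. For the first, note that an isolated point $q\neq 0$ of $\Sigma(\hat f)$ would be an isolated critical point, which is not a fold, contradicting (3); so every component of $\Sigma(\hat f)\setminus\{0\}$ is genuinely $1$-dimensional. For the accumulation issue, I would appeal to the local conic structure of semianalytic sets at a point (or the Curve Selection Lemma, as already used in the proof of Proposition \ref{P:1}): in a sufficiently small ball $U$ centered at $0$, the semianalytic set $\Sigma(\hat f)$ is homeomorphic to the cone over its intersection with a small sphere $\partial U$, and that link is a compact $0$-dimensional semianalytic set, hence finite. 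Thus $\Sigma(\hat f)\cap U$ consists of finitely many analytic arcs emanating from $0$ (plus possibly $0$ itself), so $\Sigma(\hat f)\setminus\{0\}$ has finitely many components. The main obstacle is simply marshalling the semianalytic-geometry facts (local finiteness, local conic structure) at the right level of generality; once those are in hand, the fold condition does the rest.
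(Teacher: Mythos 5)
Your proof is correct and follows essentially the same route as the paper's: the fold condition from Proposition \ref{P:1}(3) gives the local $1$-manifold structure near every point of $\Sigma(\hat f)\setminus\{0\}$, and the local finiteness of connected components of semianalytic sets gives the finiteness of components in a small ball. Your third paragraph (curve selection and local conic structure) is a valid but redundant safeguard, since the local-finiteness property of semianalytic sets already rules out components accumulating at the origin.
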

\begin{proof}
By (3), if $p\in\Sigma(\hat f)\setminus \{0\}$, then $p$ is a fold
 point, and the singular set is diffeomorphic to the real line in a
 neighbourhood of a fold point. Also, $\Sigma(\hat f)\setminus \{0\}$ is a semianalytic set, and
 hence, its intersection with a small neighbourhood of $0$ has only
 finitely many topological components.
\end{proof}
 
Let $D_{\epsilon}= \{p\in \re 2 \mid
\en p\le\epsilon\}$ and let $S_{\epsilon}=\{p\in \re 2 \mid
\en p=\epsilon\}=\partial D_{\epsilon}$. 
Define $\tilde S_{\epsilon}(\hat f)= \hat f^{-1}(S_{\epsilon})$ and
$\tilde D_{\epsilon}(\hat f)=\hat f^{-1}(D_{\epsilon})$. 

\begin{lemma}\label{L:1}
If $U$ is small enough, then $\hat f\pitchfork S_{\delta}$ for small enough $\delta>0$.
\end{lemma}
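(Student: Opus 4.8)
The plan is to show that the set of $\delta > 0$ for which $\hat f$ fails to be transverse to $S_\delta$ is finite, provided $U$ is small enough; transversality for \emph{some} small $\delta$ then follows immediately. Recall that $\hat f \pitchfork S_\delta$ means that at every point $p \in \hat f^{-1}(S_\delta)$, either $\hat f$ is a submersion at $p$ (i.e. $p \notin \Sigma(\hat f)$), or the image $d\hat f_p(\re 2)$ together with $T_{\hat f(p)}S_\delta$ spans $\re 2$. Since $\hat f$ is a local diffeomorphism away from $\Sigma(\hat f)$, the only possible obstruction to transversality occurs at singular points, so it suffices to control the values $\en{\hat f(p)}$ for $p$ ranging over $\Sigma(\hat f) \setminus \{0\}$ at which $\hat f$ fails to be transverse to the sphere through $\hat f(p)$.

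First I would use Proposition \ref{P:1}(3): every $p \in \Sigma(\hat f) \setminus \{0\}$ is a fold point, so near such a $p$ there are coordinates in which $\hat f$ has the normal form of a fold, and in particular $\hat f(\Sigma(\hat f) \setminus \{0\})$ is (locally) a smooth $1$-dimensional manifold, the discriminant curve $\Delta$. By Lemma \ref{L:fini}, shrinking $U$, the set $\Sigma(\hat f) \setminus \{0\}$ — and hence $\Delta$ — has only finitely many components, each a semianalytic arc accumulating only at $0$. At a point $q = \hat f(p) \in \Delta$, the image $d\hat f_p(\re 2)$ is exactly $T_q \Delta$ (this is the defining property of a fold: the rank drops to $1$ and the image of the differential is the tangent line to the discriminant). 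Therefore $\hat f$ fails to be transverse to $S_\delta$ at $p$ precisely when $T_q \Delta = T_q S_\delta$, i.e. when the arc $\Delta$ is tangent to the sphere $S_{\en q}$ at $q$ — equivalently, when the restriction of the function $q \mapsto \en q^2$ to $\Delta$ has a critical point at $q$.

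Next I would argue that the function $\rho(q) = \en q^2$ restricted to (each branch of) the semianalytic curve $\Delta$ has only finitely many critical points on $U$. This is where I expect the main work to lie, and the cleanest route is the Curve Selection Lemma together with semianalyticity, exactly in the style of the proof of Proposition \ref{P:1}: parametrize a branch of $\Delta$ by a real analytic arc $\gamma:[0,\epsilon) \to \re 2$ with $\gamma(0) = 0$ (a fold point maps its singular arc diffeomorphically, so such an analytic parametrization exists after shrinking $U$), and consider $g(t) = \en{\gamma(t)}^2$, which is real analytic on $[0,\epsilon)$ and not identically constant (since $g(0) = 0$ and $g > 0$ for $t > 0$). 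An analytic function that is not identically zero has isolated zeros, so $g'$ has only finitely many zeros on each compact subinterval, hence — after possibly shrinking $\epsilon$, i.e. shrinking $U$ — no zeros on $(0,\epsilon)$ at all. Running this over the finitely many branches of $\Delta$ shows that, for $U$ small enough, $\rho|_\Delta$ has no critical points on $\Delta \setminus \{0\}$; equivalently, $\Delta$ is nowhere tangent to a sphere $S_\delta$ with $0 < \delta$ small. Consequently $\hat f \pitchfork S_\delta$ for every sufficiently small $\delta > 0$, which is more than the statement requires.

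The main obstacle is the step asserting finiteness of the critical set of $\rho|_\Delta$: one must be careful that $\Delta$ is genuinely a finite union of analytic arcs through $0$ and that the parametrizing function $g$ is really non-constant on each (so that the analyticity argument applies and one is not in a degenerate situation where a whole arc of $\Delta$ lies on a single sphere). Both points are handled by the fold normal form of Proposition \ref{P:1}(3) — which gives the local analytic structure of $\Delta$ and, since $\hat f^{-1}(0) = \{0\}$ by Proposition \ref{P:1}(1), forces $g(0)=0$ with $g$ positive elsewhere, hence non-constant — combined with the finiteness of components from Lemma \ref{L:fini}. Everything else is the standard observation that a fold maps its singular locus to the discriminant with image-of-differential equal to the tangent of the discriminant, so that transversality to $S_\delta$ is equivalent to non-tangency of $\Delta$ to $S_\delta$.
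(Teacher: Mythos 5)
Your proof is correct and follows essentially the same route as the paper's: reduce to the singular set (transversality being automatic at regular points since the dimensions agree), use Lemma \ref{L:fini} and the Curve Selection Lemma to parametrize the finitely many branches analytically, and exploit analyticity together with $\hat f^{-1}(0)=\{0\}$ to rule out tangency of the image arcs to small circles. You merely spell out in more detail the step the paper compresses into ``$(\hat f\circ\gamma_i)\pitchfork S_{\delta_i}$ for small $\delta_i$,'' namely that the derivative of the non-constant analytic function $t\mapsto\en{\hat f(\gamma_i(t))}^2$ has no zeros near $0$.
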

\begin{proof}
By Lemma \ref{L:fini} there are only finitely many
branches of $\Sigma(\hat f)\setminus\{0\}$. By the Curve Selection Lemma,
for each component $B_i$ of $\sigfn$ 
we may choose an analytic curve $\gamma_i:[0,\epsilon)\to \re 2$ such
that $\gamma(0)=0$ and $\gamma_i(0,\epsilon)\subset B_i$.
The curves $\hat f\circ \gamma_i$ are
analytic and by (1) of Proposition \ref{P:1}, $(\hat f\circ
\gamma_i)(t)\ne 0$ when $t>0$ and therefore $(\hat f\circ \gamma_i)\pitchfork S_{\delta_i}$ for
small $\delta_i>0$. If $\delta < \underset i{\mbox{min }} \delta_i$,
then $\hat f|\Sigma(\hat f)\pitchfork S_{\delta}$. This proves the lemma, since
$\hat f\pitchfork S_{\delta}$ at any regular point of $\hat f$ because
the dimensions of source and target are equal.
\end{proof}
The proof of Lemma \ref{L:1} actually gives us more information. Let
$\Delta (\hat f)=\hat f(\Sigma(\hat f))$.

\begin{corollary}\label{C:tran}
If $U$ is small enough, then $\Delta(\hat f)\setminus \{0\}$ is empty
or a real analytic and such that $\Delta(\hat f)\pitchfork S_{\delta}$ for small $\delta$.
\end{corollary}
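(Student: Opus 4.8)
The plan is to extract both assertions from the proof of Lemma \ref{L:1} together with the fold normal form. First I would record the local picture at a fold. By (3) of Proposition \ref{P:1}, every $p\in\Sigma(\hat f)\setminus\{0\}$ is a fold point, so there are real analytic coordinates near $p$ and near $\hat f(p)$ in which $\hat f$ is $(x,y)\mapsto (x,y^2)$; in these coordinates $\Sigma(\hat f)=\{y=0\}$ and $\hat f|\Sigma(\hat f)$ is $x\mapsto (x,0)$. Hence $\hat f|\Sigma(\hat f)$ is a real analytic embedding in a neighbourhood of $p$ whose image is a $1$-dimensional real analytic submanifold near $\hat f(p)$, so $\Delta(\hat f)$ is, near each of its nonzero points, a real analytic curve.

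Next I would globalize after shrinking $U$. By Lemma \ref{L:fini}, $\Sigma(\hat f)\setminus\{0\}$ has finitely many components $B_1,\dots,B_r$, and by the Curve Selection Lemma argument used in Lemma \ref{L:1} we may shrink $U$ so that each $B_i$ is a single embedded real analytic arc limiting to $0$, with analytic parametrization $\gamma_i:[0,\epsilon)\to\re 2$, $\gamma_i(0)=0$, $\gamma_i((0,\epsilon))=B_i$. By the previous paragraph each $\hat f\circ\gamma_i|_{(0,\epsilon)}$ is a real analytic immersion, and by (2) of Proposition \ref{P:1} the map $\hat f|\Sigma(\hat f)\setminus\{0\}$ is injective, so $\Delta(\hat f)\setminus\{0\}=\bigcup_{i=1}^r \hat f(B_i)$ is a finite union of pairwise disjoint injectively immersed real analytic arcs. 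Since $\Delta(\hat f)$ is subanalytic (the image of a relatively compact semianalytic set under an analytic map) of dimension at most $1$, after a further shrinking of $U$ these arcs are embedded and do not accumulate on one another, so $\Delta(\hat f)\setminus\{0\}$ is a $1$-dimensional real analytic submanifold of $\re 2\setminus\{0\}$ (and it is empty precisely when $\Sigma(\hat f)\setminus\{0\}$ is).

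For the transversality, the proof of Lemma \ref{L:1} produced $\delta_i>0$ with $\hat f\circ\gamma_i\pitchfork S_{\delta_i}$ and showed that any $\delta<\min_i\delta_i$ works. Fix such a $\delta$. If $q\in\Delta(\hat f)\cap S_\delta$, write $q=\hat f(p)$ with $p\in\Sigma(\hat f)$; then $p\ne 0$, since $\hat f^{-1}(0)=\{0\}$ and $q\ne 0$, so $p=\gamma_i(t)$ for some $i$ and some $t\in(0,\epsilon)$. Near $q$ the submanifold $\Delta(\hat f)$ is the image of $\hat f\circ\gamma_i$, and since $\hat f\circ\gamma_i\pitchfork S_\delta$ at $t$ we get $T_q\Delta(\hat f)+T_qS_\delta=T_q\re 2$, that is, $\Delta(\hat f)\pitchfork S_\delta$. (When $\Delta(\hat f)\setminus\{0\}=\es$ the statement is vacuous, and $\hat f\pitchfork S_\delta$ already holds by Lemma \ref{L:1}.)

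The main obstacle is the passage from an injectively immersed union of arcs to an embedded submanifold in a punctured neighbourhood of $0$: one must rule out a branch of $\Delta(\hat f)$ spiralling into the origin or two branches clustering together there. This is precisely where the subanalyticity of $\Delta(\hat f)$ — equivalently, the cone-like local structure of a subanalytic set at a point — is used, and it is handled by shrinking $U$; everything else is a direct transcription of the fold normal form and of the argument already given for Lemma \ref{L:1}.
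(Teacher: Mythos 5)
Your proof is correct and takes essentially the route the paper intends: the paper offers no separate proof of this corollary, remarking only that "the proof of Lemma \ref{L:1} actually gives us more information," and your argument is a faithful elaboration of exactly that — local analyticity of $\Delta(\hat f)$ near its nonzero points via the fold normal form and the analytic immersions $\hat f\circ\gamma_i$, embeddedness from the injectivity of $\hat f|\Sigma(\hat f)\setminus\{0\}$ together with subanalytic finiteness after shrinking $U$, and transversality to $S_{\delta}$ carried over from the curves $\hat f\circ\gamma_i\pitchfork S_{\delta_i}$. There is nothing to object to; if anything, your treatment of the embeddedness of the image arcs is more explicit than what the paper supplies.
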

Let $\theta:\re 2\to \rr$ be given by $\theta(p)=\en p^2$.
\begin{lemma}\label{L:grad}
If $\delta$ is small enough, then $\nabla(\theta\circ \hat f)(p)\ne 0$
for all $p\in D_{\delta}\setminus \{0\}$.
\end{lemma}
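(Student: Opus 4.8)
The goal is to show that once $\delta$ is small enough, $\theta\circ\hat f$ has no critical points on $D_\delta\setminus\{0\}$. The natural strategy is by contradiction via the Curve Selection Lemma, in exactly the spirit of the proofs of Lemma \ref{L:1} and Corollary \ref{C:tran}. Consider the set
\[
C=\{p\in U\setminus\{0\}\mid \nabla(\theta\circ\hat f)(p)=0\}.
\]
Since $\hat f$ is real analytic, $\theta\circ\hat f$ is real analytic, so $C$ is a semianalytic (indeed analytic) subset of $U\setminus\{0\}$. If the lemma fails, then $0$ lies in the closure of $C$ (otherwise $C$ avoids some $D_\delta\setminus\{0\}$ and we are done). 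By the Curve Selection Lemma there is a real analytic curve $\gamma:[0,\epsilon)\to U$ with $\gamma(0)=0$ and $\gamma(0,\epsilon)\subset C$.

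First I would dispose of the case where $\gamma$ runs into the singular set. If $\gamma(0,\epsilon)\subset\Sigma(\hat f)$, then by property (3) of Proposition \ref{P:1} every such point is a fold, and as in Lemma \ref{L:1} the curve $\hat f\circ\gamma$ is a nonconstant analytic curve with $(\hat f\circ\gamma)(t)\ne 0$ for $t>0$ by property (1); but $\nabla(\theta\circ\hat f)(\gamma(t))=0$ forces, upon restricting to $\Sigma(\hat f)$, that $\frac{d}{dt}\|\hat f(\gamma(t))\|^2\equiv 0$, so $\|\hat f\circ\gamma\|$ is constant, hence $0$, contradicting (1). So we may assume $\gamma(0,\epsilon)$ consists of regular points of $\hat f$.

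Now the main computation: along $\gamma$ at a regular point $p=\gamma(t)$,
\[
\nabla(\theta\circ\hat f)(p)=2\,(d\hat f_p)^{T}\bigl(\hat f(p)\bigr),
\]
so $\nabla(\theta\circ\hat f)(p)=0$ with $d\hat f_p$ invertible forces $\hat f(p)=0$, again contradicting property (1). Composing with $\gamma$: $0=\nabla(\theta\circ\hat f)(\gamma(t))$ together with invertibility of $d\hat f_{\gamma(t)}$ gives $\hat f(\gamma(t))=0$ for all $t\in(0,\epsilon)$, contradicting $\hat f^{-1}(0)=\{0\}$ and $\gamma(t)\ne 0$. Either way we reach a contradiction, so $C$ does not accumulate at $0$, and choosing $\delta$ with $D_\delta\cap C=\varnothing$ proves the lemma.

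The only genuinely delicate point is the singular-set case, where $d\hat f$ is not invertible and one cannot immediately conclude $\hat f(p)=0$; the fix is to use the fold structure from Proposition \ref{P:1}(3) to reduce to the one-variable statement that $\|\hat f\circ\gamma\|^2$ is constant along the singular curve, which is impossible by Proposition \ref{P:1}(1). Everything else is the standard ``gradient of $\|\hat f\|^2$ vanishes $\Rightarrow$ $\hat f=0$ at regular points'' observation combined with the Curve Selection Lemma already used twice above.
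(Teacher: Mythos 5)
Your proof is correct, but it takes a genuinely different route from the paper's. The paper argues pointwise: it writes $\nabla(\theta\circ \hat f)(p)=2\left(\begin{smallmatrix}f_1(p)&f_2(p)\end{smallmatrix}\right)D\hat f(p)$, disposes of regular points exactly as you do, and at a singular point $p\ne 0$ observes that vanishing of the gradient is equivalent to $D\hat f(p)(\re 2)+\rr\{(-f_2(p),f_1(p))^T\}\ne \re 2$, i.e.\ to failure of $\hat f\pitchfork S_{\enorm{\hat f(p)}}$ at $p$, which Lemma \ref{L:1} rules out for small $\delta$. You instead apply the Curve Selection Lemma directly to the critical locus of $\theta\circ\hat f$ and derive a contradiction from the constancy of $\theta\circ\hat f\circ\gamma$; this is more self-contained, since it uses only Proposition \ref{P:1}(1) and not Lemma \ref{L:1}. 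In fact your argument is simpler than you make it sound: if $\nabla(\theta\circ\hat f)$ vanishes along $\gamma(0,\epsilon)$, then $\frac{d}{dt}\,\theta(\hat f(\gamma(t)))=\langle\nabla(\theta\circ\hat f)(\gamma(t)),\gamma'(t)\rangle=0$ for all $t\in(0,\epsilon)$ regardless of whether $\gamma$ meets $\Sigma(\hat f)$, so $\enorm{\hat f(\gamma(t))}^2\equiv\enorm{\hat f(0)}^2=0$, contradicting $\hat f^{-1}(0)=\{0\}$. Thus the fold structure from Proposition \ref{P:1}(3), which you flag as the delicate point, is not needed, and neither is the dichotomy between the singular and regular cases (if you do keep it, you should note that $J\hat f\circ\gamma$ is analytic, hence either identically zero or nonzero for small $t>0$, so the dichotomy is legitimate after shrinking $\epsilon$). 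What the paper's computation buys in exchange for invoking Lemma \ref{L:1} is the geometric identification of critical points of $\theta\circ\hat f$ with failures of transversality to the circles $S_{\delta}$, which matches how the gradient flow of $\theta\circ\hat f$ is used in Section \ref{S:cone} to build the cone structure.
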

\begin{proof}
If $\hat f=\left(\begin{smallmatrix}f_1\\f_2\end{smallmatrix}\right)$, then
$\theta \circ \hat f$ = $f_1^2+f_2^2$. We compute 
\[
\begin{aligned}
\nabla(\theta \circ \hat f)(p)&=2\big(f_1\der {f_1}x+f_2\der
{f_2}x,f_1\der {f_1}y+f_2\der {f_2}y\big)(p)\\
&=2\left(\begin{matrix}f_1(p)&f_2(p)\end{matrix}\right)\cdot D\hat f(p)
\end{aligned}
\]
If $p\notin \Sigma(\hat f)$, then $\hat f(p)\ne 0$ and $D\hat f(p)$ is
invertible, and hence, $\nabla (\theta\circ \hat f)(p)\ne 0$. Assume
that $p\in \Sigma(\hat f)$ and $\en p\ne 0$. By (1), $\hat f(p)\ne 0$
and by the above,
\[
\begin{aligned}
\nabla(\theta \circ \hat f)(p)=0&\Leftrightarrow \hat f^T(p)D\hat f(p)=0\\
&\Leftrightarrow \hat f(p)\perp \mbox{Im\,}D\hat f(p)\\
&\Leftrightarrow D\hat f(p)(\re 2)+\rr \{\left(\begin{matrix}-f_2(p)\\
    \phantom - f_1(p)\end{matrix}\right)\} \ne \re 2.
\end{aligned}
\]
Note that $\left(\begin{matrix}-f_2(p)\\
    \phantom - f_1(p)\end{matrix}\right)$ is a tangent vector at $\hat
f(p)$ to the circle $S_{\en{\hat f(p)}}$. It therefore
follows from Lemma \ref{L:1} that $D\hat f(p)(\re 2)+\rr \{\left(\begin{matrix}-f_2(p)\\
    \phantom - f_1(p)\end{matrix}\right)\} =\re 2$. This proves the lemma.
\end{proof}

\begin{lemma}[Lojasiewicz]\label{L:Loj}
There is a $\rho>0$  and  
constants $C,r>0$ such that for $p\in D_{\rho}$, $\enorm{\hat f(p)}\ge C\enorm
p^r$.
\end{lemma}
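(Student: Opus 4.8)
The plan is to invoke the classical Łojasiewicz inequality for real analytic functions. First I would consider the function $\theta\circ\hat f=f_1^2+f_2^2=\enorm{\hat f}^2$, which is real analytic on a neighbourhood of $0$ in $\re 2$. By property (1) of Proposition \ref{P:1}, its zero set on a small ball $U$ is exactly $\{0\}$: indeed $(\theta\circ\hat f)(p)=0$ iff $\hat f(p)=0$ iff $p=0$. The classical Łojasiewicz inequality then asserts that there exist $\rho>0$, $C'>0$ and an exponent $\alpha>0$ such that
\[
\enorm{\hat f(p)}^2=(\theta\circ\hat f)(p)\ge C'\,\mathrm{dist}(p,Z)^{\alpha}=C'\enorm{p}^{\alpha}
\]
for all $p\in D_{\rho}$, where $Z=(\theta\circ\hat f)^{-1}(0)=\{0\}$. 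Taking square roots gives $\enorm{\hat f(p)}\ge \sqrt{C'}\,\enorm{p}^{\alpha/2}$, so setting $C=\sqrt{C'}$ and $r=\alpha/2$ yields the claim. Strictly speaking the standard statement is local (an inequality $|g(p)|\ge c\,\mathrm{dist}(p,Z)^{\alpha}$ valid on a neighbourhood of a given point of $Z$), and here $Z=\{0\}$ is a single point, so the neighbourhood version is exactly what we need; shrinking $\rho$ if necessary so that $D_{\rho}$ lies inside the neighbourhood furnished by the theorem completes the argument.

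An alternative self-contained route, should one wish to avoid quoting the inequality as a black box, is via the Curve Selection Lemma together with a compactness argument. Suppose no such $C,r$ existed. One would like to produce, on the semianalytic set where $\enorm{\hat f(p)}$ is ``too small'', an analytic curve $\gamma:[0,\epsilon)\to\re 2$ with $\gamma(0)=0$ along which $\enorm{\hat f(\gamma(t))}$ vanishes to infinite order in $t$ while $\enorm{\gamma(t)}\sim t^{k}$ for some $k$; since $t\mapsto \enorm{\hat f(\gamma(t))}^2$ is analytic in a Puiseux parameter, it either vanishes identically—forcing $\hat f\equiv 0$ on the image of $\gamma$, contradicting (1)—or vanishes to some finite order, giving the required power-law lower bound $\enorm{\hat f(\gamma(t))}\ge C\enorm{\gamma(t)}^{r}$ and hence a contradiction. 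This is the usual proof of Łojasiewicz's inequality specialized to the present setting.

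The main obstacle is essentially bookkeeping rather than substance: one must be careful that the exponent $r$ produced is uniform over $D_{\rho}$ and not merely along individual curves, which is precisely the content that the Curve Selection Lemma plus semianalytic cell decomposition supplies (or, more cheaply, which one simply imports by citing the Łojasiewicz inequality). Given that the paper already uses the Curve Selection Lemma freely (as in the proof of Lemma \ref{L:1} and Proposition \ref{P:1}) and works throughout in the semianalytic category, the cleanest exposition is to cite the Łojasiewicz inequality directly, apply it to $\theta\circ\hat f$ whose zero set is $\{0\}$ by Proposition \ref{P:1}(1), and take square roots.
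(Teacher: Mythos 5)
Your proposal is correct and matches the paper's proof, which simply notes that $0$ is an isolated zero of $\hat f$ (Proposition \ref{P:1}(1)) and cites the classical \L ojasiewicz inequality (IV 4.1 of Malgrange). Your application to $\theta\circ\hat f$ followed by taking square roots is the same argument in slightly more detail; the alternative Curve Selection Lemma sketch is not needed.
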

\begin{proof}
Remember that 0 is an isolated zero of $\hat f$ and apply IV 4.1 of
\cite{Malgrange}. 
\end{proof}
\begin{lemma}\label{L:2}
For small $\epsilon>0$, $\tse {\hat f}$ is a compact
1-manifold diffeomorphic to $S^1$ and $0$ is in the bounded component
of $\re2\setminus\tse {\hat f}$.
\end{lemma}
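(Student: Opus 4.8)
The plan is to show first that $\tse{\hat f}$ is a smooth $1$-submanifold of $U$, then that it is compact, then that it is connected (hence diffeomorphic to $S^1$), and finally that it encloses the origin. For smoothness: by Lemma \ref{L:1} (or rather Lemma \ref{L:grad}) we know that for $\epsilon$ small, $\epsilon$ is a regular value of the function $\theta\circ\hat f=\en{\hat f}^2$ on $U\setminus\{0\}$, since $\nabla(\theta\circ\hat f)(p)\ne 0$ there. Hence $\tse{\hat f}=(\theta\circ\hat f)^{-1}(\epsilon^2)$ is a smooth $1$-manifold by the regular value theorem, provided we also know it stays away from $0$; but that is exactly the content of the \L ojasiewicz inequality, Lemma \ref{L:Loj}: if $\en{p}\le\rho$ and $\hat f(p)\in S_\epsilon$ then $C\en{p}^r\le\epsilon$, so $\en{p}\ge (\epsilon/C)^{1/r}>0$, i.e. $\tse{\hat f}$ is bounded away from the origin, uniformly, and for $\epsilon$ small enough it is contained in the ball $D_\rho\subset U$ on which our representative is defined.

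For compactness: $\tse{\hat f}=\hat f^{-1}(S_\epsilon)$ is closed in $U$, being the preimage of the closed set $S_\epsilon$. By the \L ojasiewicz argument just given it is contained in the closed annulus $\{(\epsilon/C)^{1/r}\le\en{p}\le\epsilon'\}$ for a suitable $\epsilon'$ (using $\en{\hat f(p)}\to 0$ as $p\to 0$, so points mapping to $S_\epsilon$ cannot be too close to $0$, while continuity bounds them away from $\partial U$ once $\epsilon$ is small). A closed subset of a compact annulus is compact. Therefore $\tse{\hat f}$ is a compact smooth $1$-manifold, hence a finite disjoint union of circles.

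It remains to rule out more than one circle and to locate $0$. Here the key tool is Lemma \ref{L:grad} again: on $D_\delta\setminus\{0\}$ the vector field $\nabla(\theta\circ\hat f)$ is nonvanishing, so $\theta\circ\hat f$ has no critical points there other than the origin, where it attains its minimum value $0$. Consider the gradient flow of $\theta\circ\hat f$ (or simply note that $\theta\circ\hat f$ restricted to the punctured ball is a submersion onto an interval $(0,c]$). I would argue that $\Det{\hat f}\setminus\{0\}=(\theta\circ\hat f)^{-1}((0,\epsilon^2])$ deformation retracts onto $\tse{\hat f}$ along the normalized gradient flow, so $\tse{\hat f}$ is connected iff $\Det{\hat f}\setminus\{0\}$ is; and $\Det{\hat f}$ is connected because it is a neighbourhood of $0$ in $U$ cut out as $\{\en{\hat f}\le\epsilon\}$, and one sees directly (again via the flow, pushing everything down toward the minimum at $0$) that $\Det{\hat f}$ is a "topological disc" with $0$ interior to it. Concretely: every point of $\Det{\hat f}\setminus\{0\}$ is joined to $\tse{\hat f}$ by a flow line, every point of $\tse{\hat f}$ is joined to points of arbitrarily small norm by the backward flow, and the latter all lie near $0$; pasting, $\Det{\hat f}$ is path-connected, hence so is $\tse{\hat f}$, which is therefore a single circle. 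Finally $0\notin\tse{\hat f}$ and $0$ lies in $\Det{\hat f}$, which is a neighbourhood of $0$; since $\Det{\hat f}$ is the closure of the bounded region cut out by $\tse{\hat f}$, the origin lies in the bounded component of $\re 2\setminus\tse{\hat f}$.

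The main obstacle is the connectivity claim: compactness and smoothness are immediate from Lemmas \ref{L:grad} and \ref{L:Loj}, but showing there is exactly \emph{one} circle requires genuinely using that $\theta\circ\hat f$ has no critical value in $(0,\epsilon^2]$, so that the level sets $(\theta\circ\hat f)^{-1}(c)$ are all diffeomorphic for $0<c\le\epsilon^2$ (a Morse-theory / Ehresmann-fibration argument over the interval), and then identifying the common diffeomorphism type by letting $c\to 0^+$ and using that $\hat f$ is a finite map with $\hat f^{-1}(0)=\{0\}$, so that small level sets are small loops around the single point $0$. Care is needed because $\theta\circ\hat f$ is not proper on all of $U$, only on a suitable compact ball $D_\rho$; restricting attention to $D_\rho$ (legitimate once $\epsilon$ is small, by \L ojasiewicz) repairs this and makes Ehresmann's theorem applicable.
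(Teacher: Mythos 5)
Your decomposition (smooth, compact, connected, encloses $0$) and your use of Lemmas \ref{L:Loj} and \ref{L:grad} for the first two steps match the paper; for smoothness the paper quotes the transversality of Lemma \ref{L:1} rather than the regular value theorem for $\theta\circ\hat f$, but that is the same fact. Where you genuinely diverge is in the last two steps. The paper handles both at once with a short maximum-principle argument: if a bounded component $V$ of the complement of a circle $C\subset\tse{\hat f}$ avoids $0$, then $\theta\circ\hat f$ is constant on $\partial \overline V\subset C$ and so attains an interior extremum on the compact set $\overline V\subset D_\delta\setminus\{0\}$, i.e.\ a critical point, contradicting Lemma \ref{L:grad}; applied to one circle this puts $0$ inside it, applied to a pair of circles it rules out a second component. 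Your route is the gradient-flow/deformation-retract one; it can be made to work and is arguably more illuminating, but it is longer and has two soft spots as written.

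First, the Ehresmann branch of your connectivity argument is circular: after concluding that all fibres $(\theta\circ\hat f)^{-1}(c)$ for $0<c\le\epsilon^2$ are diffeomorphic, ``identifying the common type by letting $c\to0^+$'' begs the question, since the claim that small level sets are \emph{single} loops around $0$ is exactly the lemma for a smaller $\epsilon$; finiteness of $\hat f$ and $\hat f^{-1}(0)=\{0\}$ do not by themselves exclude several nested circles. Drop that branch and keep your flow-pasting argument, which does work once fleshed out: the field $\nabla(\theta\circ\hat f)/\en{\nabla(\theta\circ\hat f)}^2$ increases $\theta\circ\hat f$ at unit speed, so its forward flow retracts $\Det{\hat f}\setminus\{0\}$ onto $\tse{\hat f}$ in time $\epsilon^2-\en{\hat f(p)}^2$ (the flow stays in $D_\delta\setminus\{0\}$ by the Lojasiewicz bound and continuity of $\hat f$ at $0$), while the backward flow from any point of $\tse{\hat f}$ drives $\en{\hat f}\to0$ and hence, again by Lojasiewicz, reaches points of arbitrarily small norm; since a small punctured disc about $0$ lies in $\tilde E_\epsilon$ and is path-connected, $\Det{\hat f}\setminus\{0\}$ is path-connected, and composing paths with the retraction gives paths in $\tse{\hat f}$. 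Be careful that the bare implication ``$\Det{\hat f}$ path-connected $\Rightarrow\tse{\hat f}$ path-connected'' is false (an annulus is connected with disconnected boundary); only the retraction of the \emph{punctured} set saves you, so phrase it that way. Second, ``$\Det{\hat f}$ is the closure of the bounded region cut out by $\tse{\hat f}$'' is asserted rather than proved; the quickest repair is to observe that $\tilde E_\epsilon$ is a bounded, connected, open subset of $\re2\setminus\tse{\hat f}$ whose boundary is $\tse{\hat f}$, hence is open and closed in whichever complementary component contains it and therefore equals that component, which must be the bounded one --- or simply to fall back on the paper's extremal-point argument for this step.
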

\begin{proof}
Let $\rho>0$ be such that $\enorm{\hat f(p)} \ge C\enorm p^r$
for all $p\in D_{\rho}$. Such a $\rho$ exists by Lemma \ref{L:Loj}. If $\epsilon\le C\rho^r$,
then $\tilde S_{\epsilon}\subset D_{\rho}$ is closed and
bounded, i.e. compact. By Lemma \ref{L:1}, if $\rho$ is small enough,
then $f\pitchfork S_{C\rho^r}$ in which case $\tilde S_{\epsilon}$
is a 1-dimensional smooth manifold. 

Every component of $\tilde S_{\epsilon}$ is diffeomorphic to $S^1$ by
the classification of smooth compact 1-manifolds. Let $C$ be one such
component. Then $C$ is an equipotensial curve of $\theta\circ \hat
f$. If $0$ is not in the bounded component of $\re 2\setminus
C$, then $\theta\circ \hat f$ has an extremal point $p$ in the bounded
component of $\re 2\setminus C$, and hence, $\nabla (\theta\circ \hat
f)(p)=0$. According to Lemma \ref{L:grad}, this is not possible for
small $\rho$. It follows that 0 is in the bounded component of $\re
2\setminus C$. 

Assume that $C$ and $D$ are different components of $\tse{\hat
  f}$. Then there are two bounded components of $\re 2\setminus (C\cup
D)$, one of the containing 0. The other component must contain an
extremal point of $\theta\circ \hat f$ which is impossible for small $\rho$.
\end{proof}

Figure \ref{F:2} below illustrates some of the properties we have proven so far.
\begin{figure}
\begin{center}
\resizebox{13cm}{!}{\input{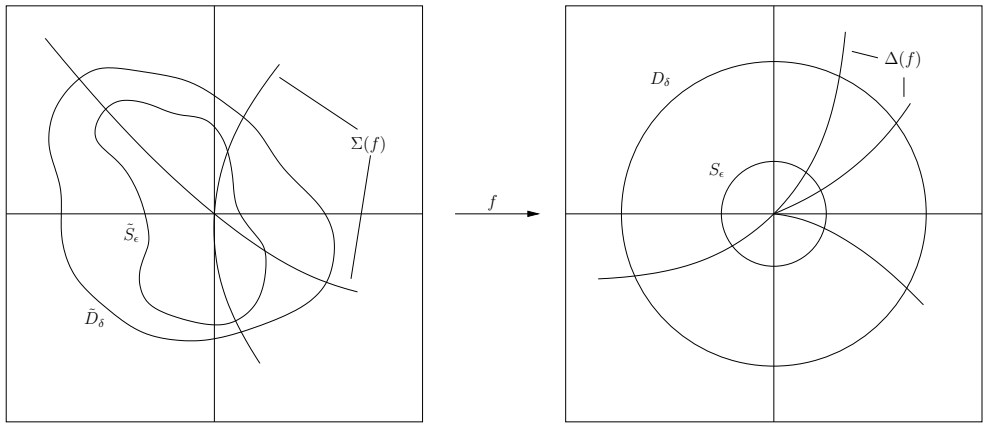}}
\caption{Illustration of Lemma \ref{L:fini}, Lemma \ref{L:1},
  Corollary \ref{C:tran} and Lemma \ref{L:2}.}
\label{F:2}
\end{center}
\end{figure}

Let $E_{\delta}=\{p\in \re 2\mid \en p<\delta\}=\mbox{int }
D_{\delta}$ and let $\tilde E_{\delta}(\hat f)=\hat f^{-1}(E_{\delta})$.

\begin{lemma}\label{L:3} 
For small $\delta>0$ the map $\hat f|\tilde E_{\delta}\setminus
\{0\}:E_{\delta}\setminus \{0\}\to E_{\delta}\setminus \{0\}$ is proper. 
\end{lemma}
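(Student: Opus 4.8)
The plan is to show that the preimage under $\hat f$ of a sufficiently small punctured disc is relatively compact away from the origin, which is exactly what properness of $\hat f|\tilde E_\delta\setminus\{0\}$ amounts to. Concretely, I would fix $\rho>0$ as in Lemma \ref{L:Loj}, so that $\enorm{\hat f(p)}\ge C\enorm p^r$ on $D_\rho$, and shrink $\delta$ so that $\tilde E_\delta(\hat f)\cap D_\rho$ is itself an open neighbourhood of $0$; by Lemma \ref{L:2} (applied to $\epsilon<C\rho^r$) and the discussion there, $\tilde E_\delta(\hat f)$ has a well-defined bounded component containing $0$ whose closure lies in $D_\rho$, and I would take $\delta$ small enough that we restrict attention to that component. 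The key point is then: if $K\subset E_\delta\setminus\{0\}$ is compact, say $K\subset\{\,a\le\en q\le b\,\}$ with $0<a\le b<\delta$, then $(\hat f|\tilde E_\delta\setminus\{0\})^{-1}(K)$ is closed in $\tilde E_\delta\setminus\{0\}$ and, by the Lojasiewicz bound, is contained in $\{\,p\in D_\rho : \enorm{\hat f(p)}\ge a\,\}\subset\{\,p : C\enorm p^r\le \text{(something)}\,\}$ — wait, that is the wrong direction, so the genuinely useful bound is the \emph{upper} one: on $\mathrm{cl}(\tilde E_\delta\cap D_\rho)$, $\hat f$ is continuous on a compact set, hence $\enorm{\hat f(p)}$ is bounded, and more importantly $(\hat f)^{-1}(K)$ is a closed subset of this compact set that is bounded away from $0$ because $\en q\ge a>0$ on $K$ forces, again via Lemma \ref{L:Loj} used as $C\enorm p^r\le\enorm{\hat f(p)}$ is not what bounds $p$ from above. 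Let me restate the mechanism cleanly: properness is equivalent to showing that any sequence $p_n$ in $\tilde E_\delta\setminus\{0\}$ with $\hat f(p_n)\to q$ for some $q\in E_\delta\setminus\{0\}$ has a subsequence converging in $\tilde E_\delta\setminus\{0\}$.

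So the core argument is the following. Suppose $p_n\in\tilde E_\delta\setminus\{0\}$ and $\hat f(p_n)\to q$ with $0<\en q<\delta$. Since (after the shrinking above) all $p_n$ lie in the compact set $\mathrm{cl}(\tilde E_\delta\cap D_\rho)\subset D_\rho$, we may pass to a subsequence with $p_n\to p\in D_\rho$. Then $\hat f(p)=q$ by continuity, so $p\in\tilde E_\delta$. It remains to rule out $p=0$: but $\hat f(0)=0$ by property (1) of Proposition \ref{P:1} (it is $\hat f^{-1}(0)=\{0\}$), while $q\ne 0$, so $p\ne 0$. Hence $p\in\tilde E_\delta\setminus\{0\}$ and the subsequence converges there; this proves properness. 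The Lojasiewicz inequality is used exactly once, at the start, to guarantee that $\tilde S_\epsilon$ (for small $\epsilon$) bounds a region whose closure sits inside $D_\rho$, so that the relevant preimages are automatically contained in a fixed compact set — this is what makes the "extract a convergent subsequence" step legitimate.

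The only subtlety — and the step I expect to need the most care — is the choice of $\delta$ and the identification of "the" set $\tilde E_\delta\setminus\{0\}$ as the domain of a proper map to $E_\delta\setminus\{0\}$: one must make sure that we are working with the connected component of $\tilde E_\delta(\hat f)$ that actually accumulates only at $0$ on its inner boundary, and that no other sheet of $\hat f^{-1}(E_\delta)$ sneaks in from outside $D_\rho$. This is handled by Lemma \ref{L:2}: for $\epsilon<C\rho^r$ the level set $\tilde S_\epsilon(\hat f)=\hat f^{-1}(S_\epsilon)$ is a single circle inside $D_\rho$ enclosing $0$, and taking $\delta=\epsilon$ we let $\tilde E_\delta(\hat f)$ denote the bounded component of $\re 2\setminus\tilde S_\epsilon(\hat f)$, which then satisfies $\mathrm{cl}(\tilde E_\delta(\hat f))=\Det{\hat f}$ compact and $\subset D_\rho$. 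With this convention the argument of the previous paragraph goes through verbatim, and in fact shows a bit more: $\hat f|\tilde E_\delta\setminus\{0\}$ is a proper map between two copies of $E_\delta\setminus\{0\}$, which is the form in which it will be used (to conclude, e.g., that it is a finite branched cover, or to take its "boundary" $\hat f|\tilde S_\epsilon$).
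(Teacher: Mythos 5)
Your argument is correct and is essentially the paper's proof: both use the Lojasiewicz inequality to confine $\tilde D_{\delta}(\hat f)$ to a compact subset of $D_{\rho}$, then extract a convergent subsequence from a preimage sequence and use $\hat f^{-1}(0)=\{0\}$ to rule out the limit being the origin. The only cosmetic difference is that the paper avoids your appeal to Lemma \ref{L:2} by simply replacing $\hat f$ with $\hat f|E_{\rho}$, after which the containment $\tilde D_{\delta}(\hat f)\subset D_{(\delta/C)^{1/r}}\subset E_{\rho}$ is automatic and no discussion of connected components is needed.
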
 
\begin{proof}
By Lemma \ref{L:Loj} there are $C,r,\rho>0$ such that $\en{\hat
  f(p)}\ge C\en p^r$ for all $p\in D_{\rho}$. Assume that $\delta$ is
so small that $\max\{\delta,\left(\frac{\delta}C\right)^{\frac 1r}\}<\rho$. Redefine
$\hat f$ putting $\hat f:=\hat f|E_{\rho}$. Then
$\tilde D_{\delta}(\hat f) \subset
D_{\left(\frac{\delta}C\right)^{\frac 1r}}\subset E_{\rho}$, and
hence, $\tilde D_{\delta}(\hat f)$ is compact.

Let $K\subset E_{\delta}\setminus \{0\}$ be a compact set. Let $\tilde
K=(\hat f|\tilde E_{\delta}\setminus \{0\})^{-1}(K)$ and let $(p_n)$
be a sequence in $\tilde K$. Then $(p_n)$ is a sequence in $\tilde
D_{\delta}(\hat f)$, and hence, there is a subsequence
$p_{n(k)}$ of $p_n$ and a point $p\in \tilde D_{\delta}(\hat
f)$ such that $p_{n(k)}\to p$ as $k\to \infty$. Then $\hat
f(p_{n(k)})\to \hat f(p)\in K$, and hence, $p\in
\tilde K$. It follows that $\tilde K$ is compact and that $\hat
f|\tilde E_{\delta}\setminus\{0\}$ is proper.
\end{proof}

\begin{proposition}\label{P:stab}
For small $\epsilon>0$, the
restriction $\hat f|\tse{\hat f}:\tse{\hat f} \to \sep$ is stable.
\end{proposition}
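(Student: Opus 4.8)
\emph{Plan.} Recall (cf. the opening of Section \ref{S:class}) that a smooth map from a compact $1$-manifold to a $1$-manifold is $C^\infty$-stable precisely when it has only finitely many singular points, each of fold (Morse) type, and it is injective on its singular set. By Lemma \ref{L:2}, for small $\epsilon$ the domain $\tse{\hat f}$ is a compact $1$-manifold diffeomorphic to $S^1$ and $\sep=S_\epsilon$ is a circle, so the statement makes sense; I will verify the three criteria in turn.

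\emph{Locating the singular set.} First I would show $\Sigma(\hat f\,|\,\tse{\hat f})\subseteq \Sigma(\hat f)\cap\tse{\hat f}$. Note $0\notin\tse{\hat f}$, since $\hat f^{-1}(0)=\{0\}$ by Proposition \ref{P:1}(1) while $0\notin S_\epsilon$; thus $\Sigma(\hat f)\cap\tse{\hat f}=(\Sigma(\hat f)\setminus\{0\})\cap\tse{\hat f}$. If $p\in\tse{\hat f}\setminus\Sigma(\hat f)$ then $\hat f$ is a local diffeomorphism near $p$, so $d\hat f_p$ carries the $1$-dimensional space $T_p\tse{\hat f}$ isomorphically onto $T_{\hat f(p)}S_\epsilon$; hence $p$ is a regular point of the restriction. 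Next, by the proof of Lemma \ref{L:1} we have $\hat f|\Sigma(\hat f)\pitchfork S_\epsilon$ for small $\epsilon$, and $\Sigma(\hat f)\setminus\{0\}$ is a $1$-manifold (Lemma \ref{L:fini}), so $(\hat f|\Sigma(\hat f))^{-1}(S_\epsilon)$ is a $0$-dimensional submanifold, i.e.\ discrete; being a closed subset of the compact set $\tse{\hat f}$ it is finite. Finally this set lies in $\Sigma(\hat f)\setminus\{0\}$, on which $\hat f$ is injective by Proposition \ref{P:1}(2), so there are no singular double points.

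\emph{Fold analysis.} It remains to show that each $p\in\Sigma(\hat f)\cap\tse{\hat f}$ is genuinely a singular point of $\hat f|\tse{\hat f}$ and is a fold (which also gives the reverse inclusion $\Sigma(\hat f)\cap\tse{\hat f}\subseteq\Sigma(\hat f\,|\,\tse{\hat f})$). By Proposition \ref{P:1}(3), $p$ is a fold point of $\hat f$, so there are coordinates $(x,y)$ near $p$ and $(u,v)$ near $\hat f(p)$, centred at those points, with $\hat f(x,y)=(x,y^2)$; then locally $\Sigma(\hat f)=\{y=0\}$ and $\Delta(\hat f)=\{v=0\}$. By Corollary \ref{C:tran}, $\Delta(\hat f)\pitchfork S_\epsilon$, and since transversality is diffeomorphism-invariant, in the $(u,v)$-chart $S_\epsilon$ is a smooth curve transverse to $\{v=0\}$, hence a graph $\{u=\phi(v)\}$ with $\phi$ smooth and $\phi(0)=0$. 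Therefore, near $p$, $\tse{\hat f}=\hat f^{-1}(S_\epsilon)=\{x=\phi(y^2)\}$, smoothly parametrized by $y\mapsto(\phi(y^2),y)$, and $\hat f$ sends this parameter to the point of $S_\epsilon$ with $v$-coordinate $y^2$. Using $v$ as a local coordinate on $S_\epsilon$ and $y$ as a local coordinate on $\tse{\hat f}$, the map $\hat f|\tse{\hat f}$ is $y\mapsto y^2$, which has a nondegenerate critical point at $y=0$; so $p$ is a fold point.

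\emph{Conclusion and main difficulty.} Combining the two parts, $\hat f|\tse{\hat f}$ has finitely many singular points, all folds, and is injective on its singular set, hence is stable. The only step requiring real care is the local model computation at a fold: one must invoke the transversality $\Delta(\hat f)\pitchfork S_\epsilon$ (equivalently $\hat f\pitchfork S_\epsilon$ at the fold points) to ensure both that $\hat f^{-1}(S_\epsilon)$ is a smooth curve through $p$ and that the restricted germ is $\mathcal A$-equivalent to $y\mapsto y^2$ rather than to a more degenerate germ.
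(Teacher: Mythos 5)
Your proof is correct, and it rests on the same three ingredients as the paper's: the fold normal form from Proposition \ref{P:1}(3), the transversality statements of Lemma \ref{L:1} and Corollary \ref{C:tran}, and injectivity of $\hat f$ on $\Sigma(\hat f)\setminus\{0\}$ from Proposition \ref{P:1}(2) to rule out singular double points. Where you differ is in how the Morse condition at a fold point $p$ is verified. The paper works in essentially arbitrary charts: it parametrizes $\tse{\hat f}$ by some $\alpha=\Psi\circ\beta^{-1}$, takes as chart on $\sep$ the projection $\pi$ onto the line perpendicular to $\Delta(\hat f)$ at $\hat f(p)$, and then computes $(A\circ F\circ\alpha)''(0)=2A_y\,(\alpha_2'(0))^2\ne 0$ by the chain rule, after arguing separately that $\alpha_1'(0)=0$, $\alpha_2'(0)\ne 0$ (from $\tse{\hat f}\pitchfork\Sigma(\hat f)$ and non-injectivity of $\hat f$ near $p$ on $\tse{\hat f}$) and that $A_x=0$, $A_y\ne 0$ (from the choice of $L$). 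You instead choose coordinates adapted to the situation: transversality $\Delta(\hat f)\pitchfork S_\epsilon$ lets you write $S_\epsilon$ locally as a graph $u=\phi(v)$ in the target normal-form chart, whence $\tse{\hat f}=\{x=\phi(y^2)\}$ and the restriction becomes literally $y\mapsto y^2$. This buys you a shorter and more transparent local argument (the facts the paper must establish about $\alpha$ and $A$ come for free from the graph representation), and it also makes explicit the reverse inclusion $\Sigma(\hat f)\cap\tse{\hat f}\subseteq\Sigma(\hat f|\tse{\hat f})$ and the finiteness of the singular set, which the paper leaves implicit. The paper's computation, on the other hand, is more robust in that it does not require solving for $S_\epsilon$ as a graph and generalizes more directly to settings where such adapted coordinates are less convenient. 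Both arguments are complete and correct.
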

\begin{proof}
It is enough to show that $\hat f|\tse {\hat f}$ has only Morse singularities
and no singular double points. Corollary \ref{C:tran} implies that $\tse
{\hat f}\pitchfork \Sigma(\hat f)$ close to the origin. We also
observe that $\Sigma(\hat f|\tse
  {\hat f})\subset \Sigma(\hat f)$.  Let $p\in \Sigma(\hat f|\tse{\hat
    f})$, and let $\beta$ be a centered chart about $p$ in $\tse {\hat
    f}$, and
let $\pi$ be the projection of onto the line $L$
perpendicular to $\Delta(\hat f)$ at $\hat f(p)$. The restriction of $\pi$ to a
neighbourhood of $\hat f(p)$ in $S_{\epsilon}$ is a chart about $\hat f(p)$ in
$S_{\epsilon}$. Let $\Psi$ and $\Phi$ be diffeomorphisms of
neighbourhoods of $p$, $\hat f(p)$ in $U$, $\re 2$ respectively such that
$\hat f=\Phi\circ F\circ \Psi$ where $F(x,y)=(x,y^2)$. Such diffeomorphisms
exist because $p$ is a fold point of $\hat f$. Now, choose a linear
isomorphism $T:L\to \rr$ which identifies $L$ with $\rr$ such that
$T(\pi(\hat f(p)))=0$. 

Let $\alpha =(\alpha_1,\alpha_2)= \Psi\circ \beta^{-1}$ and let $A=T\circ \pi\circ
\Phi$. Then $\hat f|\tse{\hat f}\sim_{\mathcal
  A}A\circ F\circ \alpha$. Now we compute
\[
(A\circ F\circ \alpha)'(t)= A_x\alpha_1'(t)+2A_y\alpha_2(t)\alpha_2'(t)
\]
and
\[
\begin{aligned}
(A\circ F\circ
\alpha)''(t)=&[A_{xx}\alpha_1'(t)+2A_{xy}\alpha_2(t)\alpha_2'(t)]\alpha_1'(t)+A_x\alpha_1''(t)\\
&+[A_{yx}\alpha_1'(t)+2A_{yy}\alpha_2(t)\alpha_2'(t)]\cdot
2\alpha_2(t)\alpha_2'(t)\\
&+A_y[2(\alpha_2'(t))^2+2\alpha_2(t)\alpha_2''(t)].
\end{aligned}
\]
Here all the partial derivatives of $A$ are to be taken at $F\circ \alpha(t)$.
Since there is no neighbourhood of $p$ in $\tse{\hat f}$ restricted to
which $\hat f |\tse{\hat  f} $ is injective and since $\tse{\hat
  f}\pitchfork \Sigma(\hat f)$,
we see from the normal form $F$ of the folds that $\alpha_1'(0)=0$ and
$\alpha_2'(0)\ne 0$. We have also chosen $\alpha_2(0)=0$. The choice
of $L$ gives $A_x(F(\alpha(0))=0$. Therefore we must have
$A_y(F(\alpha(0)))\ne 0$. We get
\[
(A\circ F\circ \alpha)''(0) =
2A_y((F(\alpha(0)))\cdot(\alpha_2'(0))^2\ne 0.
\]
This shows that $A\circ F\circ \alpha$ has a Morse singularity at 0,
and hence, $\hat f|\tse  {\hat f}$ has a Morse singularity at $p$.
\end{proof}


\subsection{Generic mappings as cones of smooth stable mappings
  between spheres }\label{S:cone} 
In this section we follow the steps in \cite{Fuk1}
pp. 246-247. 

Let $f\in \mathcal O_{\mbox{g}}$ and let $\hat f:U\to \re 2$ be a
fixed representative of $f$ with $U$ so small that the lemmas of the
previous section hold. We simplify notation putting $\tilde
S_{\epsilon}:=\tilde S_{\epsilon}(\hat f)$ and similar simplifications
for $\tilde D_{\epsilon}(\hat f)$ and $ \tilde E_{\epsilon}(\hat f)$. Let $\delta$ be so small that  
 $\nabla (\theta\circ \hat f)\ne 0$ on
$\tilde D_{\delta}\setminus \{0\}$ and let $\epsilon,\alpha>0$ be such
that $\epsilon+\alpha <\delta$. 
Let $\varphi_p(t)$ be the flowline of $\nabla(\theta\circ \hat f)$ passing
through $p$, and let $t_p$ be such that $\varphi_p(t_p)\in \tilde
S_{\epsilon}$. Define maps 
\[
\begin{aligned}
&\map {\phi}{\tilde
  E_{\epsilon+\alpha}-\{0\}}{\tilde S_{\epsilon}},\\ 
&\map {\Phi}{\tilde E_{\epsilon+\alpha}-\{0\}}{\tilde
    S_{\epsilon}\times   (0,\epsilon+\alpha)},\\
&\map {\Psi}{E_{\epsilon+\alpha}-\{0\}}{S_{\epsilon}\times
  (0,\epsilon+\alpha)}
\end{aligned}
\]
by
\[
\begin{aligned}
\phi(p)&=\varphi_p(t_p)\\
\Phi(p)&=(\phi(p),\enorm{\hat f(p)})\\
\Psi(q)&=(\epsilon \frac q{\enorm q}, \enorm q)
\end{aligned}
\]
Both $\Phi$ and $\Psi$ are certainly diffeomorphisms, and we can
define \[\map F{\tilde S_{\epsilon}\times
  (0,\epsilon+\alpha)}{S_{\epsilon}\times (0,\epsilon+\alpha)}\] by
$F=\Psi\circ \hat f\circ \Phi^{-1}$. Then $F(\tilde S_{\epsilon}\times
\{t\})\subset S_{\epsilon}\times \{t\}$ and the following diagram
commutes.
\[
\begin{CD}
\tilde E_{\epsilon+\alpha}-\{0\}@>\hat f>>E_{\epsilon+\alpha}-\{0\}\\
@V\Phi VV@VV\Psi V\\
\tilde S_{\epsilon}\times (0,\epsilon+\alpha)@>F>>S_{\epsilon}\times
(0,\epsilon+\alpha)
\end{CD}
\]
Let $\map {f_t}{\tilde S_{\epsilon}}{S_{\epsilon}}$ be defined by
$F(p,t)=(f_t(p),t)$. Then $f_t$ is a smooth homotopy and
$f_{\epsilon}=\hat f|{\tilde S_{\epsilon}}$. If we let $\avb {\pi}2{}$ be
the projection onto the first factor, we get
\[
\begin{aligned}
f_t&=\pi\circ F|{\tilde S_{\epsilon}\times \{t\}}\\
&= \pi\circ \Psi\circ \hat f\circ \Phi^{-1}|{\tilde S_{\epsilon}\times
  \{t\}}\\
&= \pi\circ\Psi\circ \hat f|{\tilde S_t}\circ \Phi^{-1}|{\tilde S_{\epsilon}\times
  \{t\}}.
\end{aligned}
\]
Thus, $f_t$ is $C^{\infty}$ equivalent to $\hat f|{\tilde S_t}$. It
follows from Proposition  \ref{P:stab} that all $\hat f|{\tilde S_t}$ and
hence, every $f_t$ is smoothly stable. Hence, there are $C^{\infty}$ diffeomorphisms 
\[
\map {h_t'}{\tilde S_{\epsilon}}{\tilde S_{\epsilon}}
\]
and
\[
\map {h_t''}{S_{\epsilon}}{S_{\epsilon}}
\]
such that $\hat f|\tilde S_{\epsilon}\circ h_t' = h_t''\circ f_t$ and we can choose
$h_t'$ and $h_t''$ such that
$h_{\epsilon}'=\mbox{id}$ and $h_{\epsilon}''=\mbox{id}$ and the
mappings
\[
\map {H'}{\tilde S_{\epsilon}\times (0,\epsilon+\alpha)}{\tilde
  S_{\epsilon}\times(0,\epsilon+\alpha)}
\]
and
\[
\map {H''}{S_{\epsilon}\times (0,\epsilon+\alpha)}{
  S_{\epsilon}\times(0,\epsilon+\alpha)}
\]
defined by $H'(x,t)=(h_t'(x),t)$ and $H''(y,t)=(h_t''(y),t)$ are
diffeomorphisms. It follows that $\hat f|\tilde E_{\epsilon+\alpha}\setminus \{0\}\sim_{\mathcal A_{\infty}}
F=(f_t,\mbox{id})\sim_{\mathcal A_{\infty}} (\hat f|\tse {\hat f},\mbox{id})$.

 
\subsection{The main theorem}
According to Proposition \ref{P:stab}, if $f\in \mathcal O_{\mbox g}$,
then $\hat f|{\tilde
  S_{\epsilon}}:\tse {\hat f} \to \sep$ is stable for small $\epsilon$. Also,
  the homotopy   $f_t$ of Section \ref{S:cone} is a smooth homotopy of
  $C^{\infty}$ 
  stable mappings, and hence, they are all $C^{\infty}$
  equivalent. Therefore, regarding $\hat f|{\tilde S_{\epsilon}}$ as a map
  between 1-spheres, we can associate a tuple $\as(f)$ unambigously
  to $f$ by the rule $\as(f)=[\as(\hat f|{\tilde
  S_{\epsilon}})]_E$, the equivalence class of $\as(\hat f|{\tilde
  S_{\epsilon}})$ under the equivalence relation introduced in Section
  \ref{SS:gr}. In the same way, we define $\as^{\#}(f)=[\as(\hat f|{\tilde
  S_{\epsilon}})]_E$ when $\Sigma(f)\ne \{0\}$. It is clear that 
\[
\as^{\#}(f)=\as^{\#}(g)\Leftrightarrow \as(f)=\as(g).
\]
\begin{theorem}\label{T:MT}
If $f,g \in \mathcal O_{\mbox g}$ and $\Sigma(f)\setminus
\{0\},\Sigma(g)\setminus \{0\}\ne \emptyset$, then 
\[
f\sim_{\mathcal A_0}g
\Leftrightarrow \as(f)=\as(g).
\]
\end{theorem}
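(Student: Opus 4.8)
The plan is to reduce Theorem~\ref{T:MT} to the circle case, Theorem~\ref{T:MT1}, via two facts: (a) every finitely determined germ is $\mathcal A_0$-equivalent to the cone on its link map $\hat f|\hat f^{-1}(S_\epsilon)$; and (b) the $\mathcal A_0$-class of that link map depends only on the $\mathcal A_0$-class of the germ. I would first note that Step~3 in the proof of Theorem~\ref{T:MT1} uses only that a fold point of a circle map is topologically distinguishable from a regular point, so that theorem in fact states: two stable circle maps are $\mathcal A_0$-equivalent if and only if they have the same $N$ and the same class $[\as]_E$. Granting (a) and (b), $\as(f)=\as(g)$ then becomes equivalent to the $\mathcal A_0$-equivalence of the two link maps, hence to $f\sim_{\mathcal A_0}g$.

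For (a), fix a representative $\hat f$ for which the lemmas of Sections~\ref{S:geo}--\ref{S:cone} hold. By Proposition~\ref{P:stab} the link map $\hat f|\hat f^{-1}(S_\epsilon)$ is stable, and Section~\ref{S:cone} produces diffeomorphisms $\Phi,\Psi$ and trivializing families $H',H''$ exhibiting $\hat f|\tilde E_{\epsilon+\alpha}\setminus\{0\}$ as $\mathcal A_\infty$-equivalent to the product $(\hat f|\hat f^{-1}(S_\epsilon))\times\mathrm{id}$ on $\hat f^{-1}(S_\epsilon)\times(0,\epsilon+\alpha)$. I would check that each of $\Phi,\Psi,H',H''$ and its inverse extends continuously to the one-point completions $\tilde E_{\epsilon+\alpha}\to\mathrm{Cone}(\hat f^{-1}(S_\epsilon))$ and $E_{\epsilon+\alpha}\to\mathrm{Cone}(S_\epsilon)$, by interchanging $0$ and the cone point: for $H',H''$ and $\Psi$ this is immediate because the radial coordinate is preserved, and for $\Phi$ it follows from $\hat f^{-1}(0)=\{0\}$ (Proposition~\ref{P:1}) together with the Lojasiewicz bound of Lemma~\ref{L:Loj}. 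This gives $f\sim_{\mathcal A_0}\mathrm{Cone}(\hat f|\hat f^{-1}(S_\epsilon))$, and the same for $g$. The implication $\Leftarrow$ follows at once: if $\as(f)=\as(g)$ the two link maps have equal $N$ and equal $[\as]_E$, hence are $\mathcal A_\infty$-equivalent via diffeomorphisms $a,b$ of $S^1$ by Theorem~\ref{T:MT1}; the cones of $a,b$ are homeomorphisms of the disk, smooth off the cone point and continuous at it (they preserve the radial coordinate), so the two cones, and therefore $f$ and $g$, are $\mathcal A_0$-equivalent.

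For $\Rightarrow$, suppose $f\sim_{\mathcal A_0}g$ via germs of homeomorphisms $h,k$ with $k\circ\hat f=\hat g\circ h$. For small $\delta$, pull the round sphere $S_\delta$ through the equivalence: since $\hat g=k\circ\hat f\circ h^{-1}$ one has $\hat g^{-1}(k(S_\delta))=h(\hat f^{-1}(S_\delta))$, and the restrictions of $h$ and $k$ conjugate the stable link map $\hat f|\hat f^{-1}(S_\delta)$ to $\hat g|\hat g^{-1}(C)$, where $C:=k(S_\delta)$ is a topological circle around $0$ which, for $\delta$ small, bounds a disk with $D_{\epsilon_1}$ inside it and $C$ inside $D_{\epsilon_2}$, $\epsilon_2$ small. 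Granting the link-independence lemma below, $\hat g|\hat g^{-1}(C)\sim_{\mathcal A_0}\hat g|\hat g^{-1}(S_\epsilon)$ for small $\epsilon$, so the stable link maps of $f$ and $g$ are $\mathcal A_0$-equivalent; by the topological content of Step~3 of Theorem~\ref{T:MT1} they have the same $[\as]_E$, i.e. $\as(f)=\as(g)$.

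What remains, and what I expect to be the main obstacle, is the \emph{link-independence lemma}: for a representative $\hat g$ as above and a topological circle $C$ around $0$ with $\overline{D_{\epsilon_1}}$ inside the disk it bounds, $C$ inside $D_{\epsilon_2}$ with $\epsilon_2$ small, and $\hat g|\hat g^{-1}(C)$ topologically equivalent to a stable circle map, one has $\hat g|\hat g^{-1}(C)\sim_{\mathcal A_0}\hat g|\hat g^{-1}(S_\epsilon)$ for small $\epsilon$. The difficulty is exactly that an $\mathcal A_0$-equivalence of germs respects none of the structure used to define $\as$ --- neither the round spheres nor the gradient flow of $\theta\circ\hat g$. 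I would attack it by a sandwiching argument: $\hat g$ maps the topological annulus of the source bounded by $\hat g^{-1}(S_{\epsilon_1})$ and $\hat g^{-1}(C)$ onto the annulus of the target bounded by $S_{\epsilon_1}$ and $C$, and the flow of $\nabla(\theta\circ\hat g)$ --- non-vanishing near $0$ by Lemma~\ref{L:grad}, and carried by $\hat g$ to curves crossing the round spheres transversally outward --- should identify this restricted map, up to $\mathcal A_0$-equivalence, with the level link map $\hat g|\hat g^{-1}(S_{\epsilon_1})$; the fact from Section~\ref{S:cone} that the level link maps $\hat g|\hat g^{-1}(S_t)$ at distinct small radii $t$ are joined by a homotopy of stable maps then closes the loop. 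Making this deformation precise while keeping track of the merely topological circle $C$ is where the genuine work lies.
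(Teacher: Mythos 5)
Your forward implication is essentially the paper's: the trivialization of Section~\ref{S:cone} exhibits $\hat f|\tilde E_{\epsilon+\alpha}\setminus\{0\}$ as a product over the link map, and Theorem~\ref{T:MT1} supplies the smooth equivalence of the two link maps; your extra care about extending $\Phi,\Psi,H',H''$ over the cone point is a reasonable filling-in of what the paper leaves implicit. The converse, however, contains a genuine gap. You reduce it to a ``link-independence lemma'' --- that $\hat g|\hat g^{-1}(C)\sim_{\mathcal A_0}\hat g|\hat g^{-1}(S_\epsilon)$ for an arbitrary topological circle $C=k(S_\delta)$ around the origin --- and you do not prove it; you yourself identify it as where the real work lies. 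The sandwiching sketch you offer does not obviously close: $C$ need not be transverse to anything, $\hat g^{-1}(C)$ need not be a manifold a priori, and the gradient flow of $\theta\circ\hat g$ gives no control over how $C$ meets the flowlines in the target. So as written the implication $f\sim_{\mathcal A_0}g\Rightarrow\as(f)=\as(g)$ is not established.

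The paper avoids this lemma entirely, and you should too. Its converse argument never compares link maps over different circles: it observes that a topological equivalence of germs carries $\Sigma(\hat f)$, $\Delta(\hat f)$ and $\hat f^{-1}(\Delta(\hat f))\setminus\Sigma(\hat f)$ onto the corresponding sets for $\hat g$ (fold points being topologically distinguishable from regular points), and that by Lemma~\ref{L:fini}, Lemma~\ref{L:1}, Corollary~\ref{C:tran} and Lemma~\ref{L:2} each of these sets is a finite union of arcs emanating from the origin, each arc meeting every small level circle exactly once. The class $[\as]_E$ is precisely the cyclic arrangement of these arcs around $0$ together with the pairing of $p$-arcs to $s$-arcs through common image values (cf.\ Remark~\ref{R:tup}), and any germ homeomorphism preserves that arrangement up to rotation and reflection, i.e.\ up to a legal permutation. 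This reads the invariance of $\as$ directly off the branch structure of the germ, with no statement needed about $\hat g^{-1}(C)$ for a merely topological circle $C$. Either supply a genuine proof of your link-independence lemma (which appears at least as hard as the theorem itself) or replace your converse by this branch-arrangement argument.
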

\begin{proof}
The latter equivalence is immediate from the definitions.
Assume $\as(f)= \as(g)$. Choose representatives $\hat f$ and $\hat g$
for $f$ and $g$ and construct the homotopies $f_t$ and
$g_t$ as in section 
\ref{S:cone}. Clearly, for small $\epsilon$ and $\alpha$, 
$\hat f|\tilde E_{\epsilon+\alpha}(\hat f)\setminus \{0\}\sim_{\mathcal A_{\infty}}
F=(f_{\epsilon},\mbox{id})$ and $\hat g|\tilde E_{\epsilon+\alpha}(\hat
g)\setminus \{0\}\sim_{\mathcal A_{\infty}}
G=(g_\epsilon,\mbox{id})$. Now, by hypothesis and Theorem \ref{T:MT1}, there
are suitable homeomorphisms
$k_{\epsilon}$ and $h_{\epsilon}$ (which can be chosen to be smooth) such 
that
\[
f_{\epsilon}=k_{\epsilon}\circ g_{\epsilon}\circ h_{\epsilon}^{-1}.
\]
It follows that $F\sim_{\mathcal A_{\infty}}G$, and hence,
$f\sim_{\mathcal A_0}g$.

Conversely, assume that $f\sim_{\mathcal A_0}g$. Then $f$ and $g$ have
representatives $\hat f$ and $\hat g$ which are topologically
equivalent to cones of maps of $S^1$ and there are 
homeomorphisms $\Sigma(\hat f)\approx \Sigma(\hat g)$, $\Delta(\hat
f)\approx \Delta(\hat g)$
and therefore also $\hat f^{-1}(\Delta(\hat f))\setminus \Sigma(\hat f) \approx
\hat g^{-1}(\Delta(\hat g))\setminus \Sigma(\hat g)$. By Lemma
\ref{L:1}, Corollary \ref{C:tran} and Lemma \ref{L:2}, when we pass to
the topologically equivalent cones of maps of circles, these sets
appear as disjoint curves in source and target intersecting each $t$-level
exactly once. It is clear that this implies that $[\as(\hat f|\tilde
S_{\epsilon}(\hat f))]_E=[\as(\hat g|\tilde S_{\epsilon}(\hat g))]_E$
and hence, that $\as(f)= \as(g)$.
\end{proof}

\subsection{Stable perturbations}\label{S:stablepert}
The notion of stable perturbations of generic smooth map-germs is
introduced in \cite{FukIsh} and is defined as follows: 
Let $f$ and $\hat f$ be as in Section \ref{S:geo} and let $\delta$ be
so small that both $ \hat f|\tilde E_{\delta}\setminus \{0\}:\tilde
E_{\delta}\setminus \{0\} \to
E_{\delta}\setminus \{0\}$ and $\hat f|\tilde S_{\delta}:\tilde
S_{\delta}\to S_{\delta}$ are $C^{\infty}$ stable. By Proposition
\ref{P:stab} such $\delta$ exist. Let $\tilde f: \tilde E_{\delta}\to
E_{\delta}$ be a stable map such that $\{p\in \tilde E_{\delta}\mid
\tilde f(p)\ne \hat f(p)\}\subset \mbox{int }\tilde E_{\delta}$. Such
a map $\tilde f$ is called a \emph{stable perturbation} of $f$. 

In \cite{FukIsh} it is shown that the number $\kappa(\tilde f)$ of
cusps of $\tilde f$ has to satisfy the formula
\[
\kappa(\tilde f) \equiv 1+ \frac 12\#\left\{\mbox{branches of }
  \Sigma(f)\setminus \{0\}\right\} + \deg f.
\]
Proposition \ref{P:deg}  enables us to reformulate this formula for
$\kappa(\tilde f)$ in terms of the components of $\as f$. 

\begin{proposition}
Let $f\in \mathcal O_{\mbox g}$ with $\as^{\#}(f)=[x_1,\ldots,x_n]_E$ and let $\tilde f$ be a stable
perturbation of $f$. Then the number $\kappa(\tilde f)$ of cusps of
$\tilde f$ satisfies
\[
\kappa(\tilde f)\equiv 1+\frac n2+\frac 1n\left|\sum_{i=1}^n(-1)^{i+1}[x_i+1]\right|
\mod 2.
\]
\end{proposition}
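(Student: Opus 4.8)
The plan is to combine the two formulas already available. From \cite{FukIsh} we are given
\[
\kappa(\tilde f) \equiv 1+ \tfrac 12\#\{\text{branches of } \Sigma(f)\setminus\{0\}\} + \deg f \mod 2,
\]
and from Proposition \ref{P:deg} we know
\[
|\deg f| = \left|\tfrac 1n\sum_{i=1}^n(-1)^{i+1}x_i\right|.
\]
Since everything is taken mod 2, and $\deg f\equiv -\deg f\equiv|\deg f|\mod 2$, the term $\deg f$ in the first formula may be replaced by $\left|\tfrac 1n\sum_{i=1}^n(-1)^{i+1}x_i\right|$. So the only remaining task is to rewrite the two other pieces, namely $\tfrac 12\#\{\text{branches of }\Sigma(f)\setminus\{0\}\}$ and the adjustment between $\sum(-1)^{i+1}x_i$ and $\sum(-1)^{i+1}[x_i+1]$, in terms of $n$.

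First I would identify the number of branches of $\Sigma(f)\setminus\{0\}$. Passing to the cone description (Section \ref{S:cone}), $\Sigma(\hat f)\setminus\{0\}$ corresponds, under the homeomorphism to the cone on $\hat f|\tilde S_\epsilon$, to the cone on $\Sigma(\hat f|\tilde S_\epsilon)$; each singular point $s_i$ of the circle map $\hat f|\tilde S_\epsilon$ sweeps out one branch as $t$ varies. Since $\hat f|\tilde S_\epsilon$ has $n(\hat f|\tilde S_\epsilon)=n$ singular points (that is precisely what $n$ counts in $\as^\#(f)=[x_1,\dots,x_n]_E$), we get $\#\{\text{branches of }\Sigma(f)\setminus\{0\}\}=n$, hence $\tfrac 12\#\{\cdots\}=\tfrac n2$. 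This accounts for the $\tfrac n2$ term in the claimed formula.

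Next I would reconcile the two sums. We have
\[
\sum_{i=1}^n(-1)^{i+1}[x_i+1] = \sum_{i=1}^n(-1)^{i+1}x_i + \sum_{i=1}^n(-1)^{i+1},
\]
and because $n$ is even, $\sum_{i=1}^n(-1)^{i+1}=0$. Therefore $\sum_{i=1}^n(-1)^{i+1}[x_i+1]=\sum_{i=1}^n(-1)^{i+1}x_i$ exactly, not merely mod anything, so $\tfrac 1n\left|\sum(-1)^{i+1}[x_i+1]\right|=\tfrac 1n\left|\sum(-1)^{i+1}x_i\right|=|\deg f|$ by Proposition \ref{P:deg}. Substituting these identifications into the \cite{FukIsh} congruence gives
\[
\kappa(\tilde f)\equiv 1+\frac n2+\frac 1n\left|\sum_{i=1}^n(-1)^{i+1}[x_i+1]\right|\mod 2,
\]
which is exactly the asserted formula. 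I do not anticipate a genuine obstacle here; the only point needing a sentence of care is the branch count, i.e. checking that the number of branches of $\Sigma(f)\setminus\{0\}$ really equals the number $n$ of singular points of the associated circle map $\hat f|\tilde S_\epsilon$ — this follows from Lemma \ref{L:1}, Corollary \ref{C:tran} and the product structure $F(\tilde S_\epsilon\times\{t\})\subset S_\epsilon\times\{t\}$ established in Section \ref{S:cone}, exactly as in the converse direction of the proof of Theorem \ref{T:MT}.
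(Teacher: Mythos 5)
Your proof is correct and follows essentially the same route as the paper: apply the congruence from \cite{FukIsh}, identify $n$ with the number of branches of $\Sigma(f)\setminus\{0\}$, and invoke Proposition \ref{P:deg} for the degree term. You even supply a detail the paper's proof glosses over, namely that $\sum_{i=1}^n(-1)^{i+1}[x_i+1]=\sum_{i=1}^n(-1)^{i+1}x_i$ because $n$ is even, which is needed to reconcile Proposition \ref{P:deg} (stated with $x_i$) with the formula as asserted (with $x_i+1$).
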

\begin{proof}
By Theorem 2.1 of \cite{FukIsh}, 
\[
\kappa(\tilde f)\equiv 1+ \frac 12\#\left\{\mbox{branches of }
  \Sigma(f)\setminus \{0\}\right\} + \deg f.
\]
By definition, $n= \#\left\{\mbox{branches of }
  \Sigma(f)\setminus \{0\}\right\}$ and furthermore, $|\deg \hat
f|\tilde S_{\epsilon}(\hat f)| = |\deg f|\equiv\deg f \mod 2$. By
Proposition \ref{P:deg}, $|\deg \hat
f|\tilde S_{\epsilon}(\hat f)| = \frac 1n\left|\sum_{i=1}^n(-1)^{i+1}[x_i+1]\right|$ and
this finishes the proof.
\end{proof} 

\subsection{Examples and tables}
When calculating $\as$, one has to check that the germ in question has
only fold singularities outside the origin. Let $\omega:\re 2\to \re 2$
be a smooth map and let $p\in \Sigma(\omega)$. It is shown in
\cite{BroSku}, Section 3, that $p$ is a fold point if
and only if 
\[
D\omega(p)\left(\begin{matrix}\phantom -\frac{\partial}{\partial
      y}J\omega(p)\\-\frac{\partial}{\partial
      x}J\omega(p)\end{matrix}\right)\ne \left(\begin{matrix} 0\\0\end{matrix}\right).
\]
For simplicity, put
\[
\nabla_{\perp}J\omega(p)=\left(\begin{matrix}\phantom -\frac{\partial}{\partial
      y}J\omega(p)\\-\frac{\partial}{\partial
      x}J\omega(p)\end{matrix}\right).
\]

\begin{example}\label{E:1}
Let $\omega(x,y)=(x,y^3+x^ky)$. We find $J\omega(x,y)=3y^2+x^k$, and
therefore, $\Sigma(\omega)$ is given by $3y^2+x^k=0$. We see that $k$
has to be odd in order for $\Sigma(\omega)\setminus \{0\}$ to be
non-empty. Assume that $k$ is odd. It is clear that $\omega^{-1}(0)=\{0\}$.

 The branches of $\omega$ is given
by 
\[
y=\pm \frac 1{\sqrt 3}(-x^k)^{\frac 12}.
\]
Let $z(x)=\frac 1{\sqrt 3}(-x^k)^{\frac 12}$. We compute 
\[
\omega(\pm z(x))= (x, \mp \frac 2{3\sqrt 3}(-x^k)^{\frac 32}).
\]
This shows that $\omega$ has no singular double points. Also, 
\[
D\omega(x,y)\nabla_{\perp}J\omega(x,y)=\left(\begin{matrix}6y\\0\end{matrix}\right)
\]
for $(x,y)\in \Sigma(\omega)$. This shows that $\omega$ has only fold
singularities outside the origin.

To find the branches of
$\omega^{-1}(\Delta(\omega))\setminus \Sigma(\omega)$, let $x<0$ and
consider $f_x(y)=y^3+x^ky$. We want to solve the equations 
\[
f_x(y)=f_x(z(x))
\]
 and 
\[f_x(y)=f_x(-z(x)).\]
Since $f_x$ is a polynomial of degree 3 in $y$ and $\pm z(x)$ are local
extremal points of $f_x$, there are $y_1(x)<-z(x)$ with $f_x(y_1(x))=f_x(z(x))$
and $y_2(x)>z(x)$ with $f_x(y_2(x))=f_x(-z(x))$. No other solutions
exist. We need to show that $y_1(x)\to 0$ as $x\to 0$ and $y_2(x)\to
0$ as $x\to 0$. We know that $f(x,\pm z(x))\to 0$ as $x\to
0$. Therefore, $f_x(y_1(x))=(y_1(x))^3+x^ky_1(x)\to 0$ as $x\to 0$ and
hence, $y_1(x)\to 0$ as $x\to 0$. The same argument applies to
$y_2$. Altogether, we have proved that 
\[
[\as(\omega)]_E=[(p,s,s,p)]_E.
\]
\end{example}

In \cite{Gaf}, T. Gaffney presents a table (\cite{Gaf}, 9.14) with
normal forms of topologically distinct map germs $\mathbb C^2 \to
\mathbb C^2$. Using theorem \ref{T:MT}, we are able to reduce this
list when we think of it as a list of map germs $\re 2 \to \re 2$.
Table \ref{Ta:G} is Gaffney's list of germs with the $\as$
calculated. We see that many of the $\mathcal A_0$-equivalence classes in Table
\ref{Ta:G} are the same in the real case. In the real case, Table
\ref{Ta:G}  reduces to Table \ref{Ta:S}.

\newpage

\begin{table}[!]
\begin{center}
\begin{tabular}{p{1in} p{2in} l}\hline
Type & & $[\as]_E$\\ \hline
(1) & $(x,y)$&$[(p)]_E$\\
(2) & $(x,y^2)$&$[(s,s)]_E$\\ 
(3) & $(x,xy+y^3)$&$[(p,s,s,p)]_E$\\
$\mbox{(4)}_k$ & $(x,y^3+x^ky)$& $[(p,s,s,p)]_E$\\
(5) & $(x,xy+y^4)$ & $[(s,s)]_E$\\
(6)&$(x,xy+y^5)$ & $[(p,s,s,p)]_E$\\
(7)&$(x,xy+y^6)$ & $[(s,s)]_E$\\
(8)&$(x,xy+y^7)$ & $[(p,s,s,p)]_E$\\
$\mbox{(9)}_{2k+1}$&$(x,xy^2+y^4+y^{2k+1})\quad$ & $[(p,s,s,p,p,s,s,p)]_E$\\
(10)&$(x,xy^2+y^5)$&$[(p,s,s,p,p,s,s,p)]_E$\\
(11)&$(x,x^2y+y^4)$&$[(s,s)]_E$\\
(12)&$(x,xy^2+y^6+y^7)$&$[(s,p,s,s,p,s,p,p)]_E$\\
(13)&$(x,x^2y+xy^3+y^5)$&$[(p)]_E$\\ 
(14)&$(x,x^3y+y^4+x^3y^2)$&$[(s,s)]_E$\\ \hline
\end{tabular}
\end{center} 
\caption{Gaffney's table}\label{Ta:G}
\end{table} 

\begin{table}[!]
\begin{center}
\begin{tabular}{p{1in} p{2in} l}\hline
Type & & $\as$\\ \hline
(1) & $(x,y)$&$[(p)]_E$\\
(2) & $(x,y^2)$&$[(s,s)]_E$\\ 
(3) & $(x,xy+y^3)$&$[(p,s,s,p)]_E$\\
(4)&$(x,xy^2+y^5)$&$[(p,s,s,p,p,s,s,p)]_E$\\
(5)&$(x,xy^2+y^6+y^7)$&$[(s,p,s,s,p,s,p,p)]_E$\\\hline
\end{tabular}
\end{center} 
\caption{Reduced table}\label{Ta:S}
\end{table} 

\subsection{Acknowledgements} The author wishes to thank Hans
Brodersen for ideas and corrections, and Magnus D. Vigeland for
suggesting the proof of Proposition \ref{P:mn4}.

\bibliography{ref}

\end{document}